\theoremstyle{plain}
\newtheorem{theorem}{Theorem}[section]
\newtheorem{lemma}[theorem]{Lemma}
\newtheorem{proposition}[theorem]{Proposition}
\theoremstyle{definition}
\theoremstyle{remark}
\newcommand{\ecc}{\`{e}\ }
\newcommand{\acc}{\`{a}\ }
\newcommand{\ucc}{\`{u}\ }
\newtheorem{defin}{Definition}
\newtheorem{thm}{Theorem}
\begin{document}

\articletype{Original Research Article}

\title{Loss Functions in Restricted Parameter Spaces and Their Bayesian 
Applications~\footnote{This is a final submitted version of the paper published in Journal of Applied Statistics http://dx.doi.org/10.1080/02664763.2019.1586848}}

\author{
\name{P. Mozgunov\textsuperscript{a}\thanks{Contact P. Mozgunov, Email:p.mozgunov@lancaster.ac.uk}, T. Jaki\textsuperscript{a} and M. Gasparini\textsuperscript{b}}
\affil{\textsuperscript{a}Department of Mathematics and Statistics, Lancaster University, UK.; \textsuperscript{b}Dipartimento di Scienze Matematiche, Politecnico di Torino, Turin, Italy}
}

\maketitle

\begin{abstract}
Squared error loss remains the most commonly used loss function for constructing a Bayes estimator of the parameter of interest. However, it can lead to sub-optimal solutions when a parameter is defined on a restricted space. It can also be an inappropriate choice in the context when an extreme overestimation and/or underestimation results in severe consequences and a more conservative estimator is preferred. We advocate a class of loss functions for parameters defined on restricted spaces which infinitely penalize boundary decisions like the squared error loss does on the real line. We also recall several properties of loss functions such as symmetry, convexity and invariance. We propose generalizations of the squared error loss function for parameters defined on the positive real line and on an interval. We provide explicit solutions for corresponding Bayes estimators and discuss multivariate extensions. {Four} well-known Bayesian estimation problems are used to demonstrate inferential benefits the novel Bayes estimators can provide in the context of restricted estimation. 
\end{abstract}

\begin{keywords}
Aitchison Distance; Bayesian Estimation; Scale Parameter
\end{keywords}

\section{Introduction}

In many parameter estimation problems, the support of the parameter is either naturally restricted (e.g. probability, variance, exponential distribution parameter) or an investigator can restrict it based on previously obtained knowledge (e.g. treatment effects on children given the data for adults). The knowledge about restricted space can carry important information and improve estimation~\citep{admissible}. There are also application areas in which estimates on bounds of the restricted space are highly undesirable as they can lead to severe consequences. For instance, underestimating the potential of an event to have disastrous or life-threatening consequences may be worse than overestimating it~\citep{norstrom1996}. An erroneously low estimated risk-level can lead to the absence of initiative to reduce it.

One of the ways to incorporate the information about a restricted space is to employ a Bayesian approach and to define a (uniform) prior distribution on the restricted space~\citep{hartigan}. However, once a posterior is obtained, the squared error loss
\begin{equation}
L_q(\theta,d)= (\theta-d)^2
\label{quad}
\end{equation}
where $d$ is a decision the statistician has to take in order to
approximate an unknown estimand $\theta$, called parameter, is often used to summarize a posterior distribution. The squared loss function~(\ref{quad}) ignores the information about the restricted parameter space and is recognised to lead to suboptimal solutions~\citep[see e.g.][for alternative loss functions for a scale parameter]{stein,brown}. Research on improving the Bayes estimator under squared loss function (a posterior mean) for a scale  parameter has consequently attracted a great deal of attention~\citep[see e.g.][and references therein]{kubokawa}.

In addition, since loss function~(\ref{quad}) does not penalise boundary values, it was found to be unacceptable in many application areas: see~\cite{norstrom1996} for examples in  reliability analysis, \cite{karimnezhad2014bayes} in environmental sciences and \cite{sainthilary2018} in  drug development. To avoid boundary values of a scale parameter, \cite{norstrom1996} introduced the \textit{precautionary loss function}
\begin{equation}
L_{sq}(\theta,d) = \frac{(d-\theta)^2}{d} \ {\rm where} \ \theta,d \in (0,+\infty)
\label{lossscale}
\end{equation}
which was used by many researchers~\citep{prediction,karimnezhad2016bayes}.

The precautionary loss function covers the case of the scale parameter. There are, however, many applications in which the parameter of interest is restricted to an interval $(a,b)$ and similar problems of severe consequences of boundary decisions can appear. {We provide two motivating examples from the medical domain. Firstly, in the setting of outbreaks, the probability of response for a drug able to stop the outbreak should be high (say $>90\%$). In this case, overestimation of the probability of response can lead to the approval of a drug which cannot stop the outbreak that can cost a lot of human lives. Secondly, in many paediatric trials, adult data  responses can be used to define feasible values of responses (usually an interval) for children. At the same time, underestimation of the response effect for comparative treatments in paediatric clinical trials is highly undesirable as it might result in an underpowered and unethical study. In both settings, one can benefit from the application of specific loss function for parameters defined on an interval. We provide more details on the consequences in the later example in Section 6.}

Despite the importance, the question of an appropriate loss function choice for a parameter $\theta$ defined on the interval $(a,b)$ has been paid less attention in the statistical literature compared to a scale parameter. At the same time, its importance is acknowledged in many fields~\citep[see][for examples in compositional data analysis]{ait,logisticnorm2}. {Specifically, \cite{ait} proposed to use
$$
L_{iB}(\theta,d)=\left( {\rm logit}(d) - {\rm logit}(\theta) \right)^2.
$$
as the measure of distance for $d,\theta \in (0,1)$ where ${\rm logit}(x)=\log\frac{x}{1-x}$ is the logit-transformation. This is the squared error loss after the logit-transformation of $d$ and $\theta$. While being intuitively clear, it is not convex and it has no explicit formula of the Bayes estimator, making its use challenging in applications.} Furthermore, despite the variety of literature on families of loss functions for parameters restricted to an interval and on corresponding improved Bayes estimators~\citep[see e.g.][]{kubokawa,restricted}, they seem to be rarely applied in practice due to  their complexity and to a lack of closed-form solutions. The choice of loss functions for parameters defined on the interval and on the positive real line is yet an under-represented area in the Bayesian literature and the usual mean still remains a common summary statistic.

The contribution of this work is twofold.  Firstly, we provide a unified approach to define symmetry of a loss function when a parameter space is restricted to a particular open subset based on an appropriate definition of distance. We underline that our distances on corresponding parameter spaces share a common property - infinite penalization of the bounds which is also known as the \textit{balance property}~\citep{admissible}. We also recall two other desirable properties of loss functions: convexity and invariance. Secondly, we propose several loss functions which are as simple as the squared loss function~(\ref{quad}), have explicit solutions for the corresponding Bayes estimator and incorporate the information about the restricted parameter space in the corresponding Bayes estimator. In particular, we propose the scale invariant generalization of the the precautionary loss function for a scale parameter $\theta \in (0,+\infty)$ and the interval squared loss function
$$
L_{iq}(\theta,d)=\frac{(d-\theta)^2}{(d-a)(b-d)}
$$
for the parameter $\theta \in (a,b)$. We show that the Bayes estimator corresponding to the interval squared loss function includes the Bayes estimator of the squared loss function~(\ref{quad}) and of the precautionary loss function as limiting cases. It is found that the interval squared and precautionary loss functions are both symmetric on the corresponding parameter spaces and can be useful in application areas where conservative estimates are preferred. We generalise the approach for the multivariate parameter space and demonstrate how Bayes estimators obtained using the proposed loss functions behave in three classic problems of Bayesian estimation compared to standard approaches. 

The rest of the paper is organized as follows. A historical perspective for the scale parameter estimation and the case of symmetric loss function on the positive real line is given in Section~2. Section~3 introduces the novel loss function for an interval. The multivariate generalizations are given in Section~4. Three examples  demonstrating  novel loss functions and corresponding Bayes estimators are considered in Section~5.

\section{Scale Symmetry}

\subsection{A Historical Anecdote: Galileo on Scale Symmetry}

In the Spring of 1627, a {\em peculiar controversy}
\footnote{the italic is a translation of a commentary to \cite{galilei}
appearing in the edition of Galilei's works 
mentioned in the bibliography,
from which all of the quotes are taken, following \cite{scardovi}.}
arose in one of Florence intellectual circles,
where {\em noble gentlemen} used to entertain
{\em erudite talks}:  \\

\begin{em}
 Un cavallo, che vale veramente cento scudi,
da uno \ecc stimato mille scudi e da un altro dieci scudi:
si domanda chi abbia di loro stimato meglio,
e chi abbia fatto manco stravaganza nello stimare.  \\
\end{em}  

The problem translates into: ``A horse, whose true worth is one hundred {\em scudi}~\footnote{a monetary unit, literally, a shield}, is
estimated by someone to be one thousand {\em scudi} and by someone
else to be ten {\em scudi}: the question is, who gave a better
estimate, and who instead gave a more extravagant estimate?''. It is
formulated in a letter from Andrea Gerini to Nozzolini,
an {\em erudite priest}. Gerini wanted Nozzolini's opinion on 
a sentence by \cite{galilei}, according to whom  \\

\begin{em}
 \dots li due stimatori abbiano egualmente esorbitato
e commesse eguali stra\-va\-gan\-ze nello stimare l'uno mille
e l'altro dieci quello che realmente val cento,  \\ 
\end{em}  

\noindent which translates to: ``The two estimators have been equally exorbitant and are responsible for an equal extravagance by estimating, one thousand the former and ten the latter, what is really worth one hundred''. 

In the  intense correspondence following the initial letters, 
Nozzolini argues that the estimates should be evaluated 
according to the {\em arithmetic
proportion}, whereas Galileo insists 
that the correct method of judging is by 
{\em geometric proportion}.
The crux of the problem is that the estimand is a 
positive quantity, for which the {\em geometric proportion}
seems more  appropriate,
as wittingly argued by Galileo in another
letter:   \\

\begin{em}
 Se uno stimasse alta dugento braccia
una torre, che veramente fusse alta cento,
con quale esorbitanza nel meno paregger\acc il signor Nozzolini
\l'altra nel pi\ucc?   \\
\end{em}

\noindent which translates as: ``If one were to overestimate a one-hundred arm high tower as two-hundred arm high, what underestimate would 
Nozzolini consider as equally deviating?''

\subsection{Scale Symmetry and Scale Invariance}
Consider a toy example to illustrate Galileo's position in modern statistical terms. Two inferential procedures are based on two
independent experiments:
 \begin{enumerate}
 \item Estimate $\mu \in (-\infty,+\infty)$ given i.i.d. $X_i \sim \mathcal{N}(\mu,\sigma^2)$, $\sigma^2$ is known;
 \item Estimate  $\sigma \in (0, + \infty)$  given i.i.d. $Y_i \sim \mathcal{N}(\mu,\sigma^2)$, $\mu$ is known.
 \end{enumerate}

\noindent Assume that the true parameters values are equal $\theta=\mu=\sigma$ and $X_i$ and $Y_k$
 independent for all $i,k$. Using squared error loss~(\ref{quad}),
 the decision $\mu=0$ in the first experiment and $\sigma=0$ in the
 second are equally penalized, while this should not be the case.  The claim of $\sigma=0$ implies that the $Y$'s are degenerate
 random variables, an extremely strong statement which should be
 penalized similarly to the decision $\mu=+\infty$ or
 $\sigma=+\infty$. The squared error loss function imposes an infinite
 penalty to a boundary decision in the first experiment and does not
 in the second one. While the decision $\sigma=0$ is usually prevented by a proper choice of the prior, the squared loss function does not imply that it should be avoided and associated with a severe penalty. In contrast, an appropriate loss function imposes such a penalty and can be also used to prevent boundary decisions. We define the properties of such loss function for a scale parameter in this section.

Let us start with the following definition for a parameter defined on the whole real line.
\begin{defin}
  A loss function $L(\theta,d)$ is symmetric if, for every
$d_1$, $d_2$ and $\theta$
  $\in \mathbb{R}^1$
\begin{equation}
\left( \theta-d_1\right)^2 = \left( d_2-\theta\right)^2
\label{symmetry}
\end{equation}
implies $L(\theta,d_1)=L(\theta,d_2)$.
\label{defsymmetry}
\end{defin}
The Definition~\ref{defsymmetry} implies that two decisions defined on the real line should be equally penalized by a symmetric loss function $L(\cdot,\theta)$ if they stand on the same squared distance from $\theta$. Note that for $d_1<d_2$ Equation~(\ref{symmetry}) can be rewritten as
$ \theta =\left(d_1+d_2\right) /2.$ It follows that if $\theta$ is the {\em arithmetic mean} of $d_1$ and $d_2$,
then these decisions should be equally penalized. 
Clearly, the squared error loss of equation~(\ref{quad}) is symmetric on the real line by definition. 

Then, Galilei's claim of {\em eguali stravaganze} for a positive
parameter $\theta$ can be expressed in
modern terminology as the requirement of a {\em scale symmetric}
loss function, as in the following definition.
\begin{defin}
  A loss function $L(\theta,d)$ is scale symmetric if, for
  every 
$d_1$, $d_2$ and
  $\theta$ $\in \mathbb{R}_{+}$
\begin{equation}
\frac{d_1}{\theta}=\frac{\theta}{d_2}
\label{scalesymmetry}
\end{equation}
implies $L(\theta,d_1)=L(\theta,d_2)$.
\label{defscale}
\end{defin}
\noindent Equation~(\ref{scalesymmetry}) can be rewritten as
$ \theta =\sqrt{d_1 d_2}$
or 
$
\log(\theta) = \left(\log d_1 + \log d_2\right) / 2.
$
In other words, if $\theta$ is the {\em geometric mean} of $d_1$ and $d_2$,
then these decisions should be equally penalized by a scale symmetric loss function. As in Definition~\ref{defsymmetry} of symmetry for the real line, two decisions are symmetric if the parameter $\theta$ is their appropriate mean -- geometric in this case as opposed to arithmetic. This fact will be used for our proposal of the definition of symmetry on interval in Section~3.

The distance on the positive real line, $\mathbb{R}_{+}$ defined as~\citep{logisticnorm2}
\begin{equation}
\mathcal{D}_{+}(\theta,d)=(\log \theta - \log d)^2
\label{distancealt}
\end{equation}
is known in Statistics as Brown's loss function~\citep{brown}. Its motivation is to rescale the positive real line to the whole real line via the log transformation and to use the squared error loss function. Here, the logarithm function is a natural choice for a positive random variable.  Note that $
\mathcal{D}_{+}(\theta,d_1)=\mathcal{D}_{+}(\theta,d_2)
$
implies either $d_1=d_2$ or Equation~(\ref{scalesymmetry}). Therefore, we could also restate Definition~\ref{defscale} in terms of~$\mathcal{D}_{+}(\cdot)$.

The Euclidean distance on the real line and $\mathcal{D}_{+}$ on the positive real line infinitely penalize boundary values on the corresponding parameter space. In case of $\theta \in \mathbb{R}$, the squared distance $L_q(\theta,d)$ takes an infinite value when $d=\pm \infty$.  For similar reasons, we require
that an appropriate loss function  for a scale parameter should go to infinity as the decision approaches
the natural boundaries of the parameter space, to reproduce the
behaviour at $\pm \infty$ of the squared error loss function. A loss function with this property are also called \textit{balanced}~\citep{admissible}. 

We recall one more property of loss functions for a parameter on the positive real line - the scale invariance.
\begin{defin} Loss function $L(\theta,d)$ is scale invariant if for every $c>0$ and every pair $(\theta,d)$, $$L(\theta,d) = L(c\theta,cd). $$
\end{defin}
Then, the following result can be obtained.
\begin{lemma}
A loss function is scale invariant and scale symmetric if and only if
it can be written as a scalar function $g$ such that 
$g(d/\theta)=g(\theta/d)$.
\label{lemma}
\end{lemma}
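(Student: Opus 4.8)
The plan is to prove both implications by exploiting scale invariance to collapse $L$ into a function of the single ratio $d/\theta$, and then to read off scale symmetry as a symmetry of that scalar function under inversion of its argument.

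First I would establish the forward direction. Assume $L$ is scale invariant and scale symmetric. Since $\theta \in \mathbb{R}_{+}$, I can apply scale invariance with the particular choice $c = 1/\theta$, which gives $L(\theta,d) = L(1, d/\theta)$. Hence $L$ depends on the pair $(\theta,d)$ only through the ratio $d/\theta$, and I define the scalar function $g(r) := L(1,r)$ so that $L(\theta,d) = g(d/\theta)$. It remains to translate scale symmetry into a property of $g$. Fix $\theta$ and $d_1$, set $r = d_1/\theta$, and let $d_2$ be the decision determined by the scale symmetry relation~(\ref{scalesymmetry}), namely $\theta = \sqrt{d_1 d_2}$, equivalently $d_2/\theta = \theta/d_1 = 1/r$. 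Scale symmetry asserts $L(\theta,d_1) = L(\theta,d_2)$, which in terms of $g$ reads $g(r) = g(1/r)$. As $d_1$ ranges over $\mathbb{R}_{+}$ the ratio $r = d_1/\theta$ sweeps out all of $\mathbb{R}_{+}$, so this yields the claimed functional equation $g(r) = g(1/r)$ for every $r>0$.

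For the converse, suppose $L(\theta,d) = g(d/\theta)$ for some scalar $g$ with $g(r) = g(1/r)$. Scale invariance follows by direct substitution: for any $c>0$, $L(c\theta, cd) = g\bigl(cd/(c\theta)\bigr) = g(d/\theta) = L(\theta,d)$. For scale symmetry, suppose $d_1, d_2, \theta$ satisfy~(\ref{scalesymmetry}), so that $d_1/\theta = \theta/d_2 =: r$; then $d_2/\theta = 1/r$, whence $L(\theta,d_1) = g(r) = g(1/r) = L(\theta,d_2)$, which is exactly the requirement of Definition~\ref{defscale}.

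I expect no substantial obstacle here, as the argument is essentially a single change of variables followed by verification. The only points that demand care are the restriction to $\theta>0$, which is what makes the choice $c = 1/\theta$ admissible under scale invariance, and the bookkeeping that converts the relation~(\ref{scalesymmetry}) into reciprocal arguments of $g$; once the representation $L(\theta,d)=g(d/\theta)$ is in hand, both properties reduce to the identity $g(r)=g(1/r)$.
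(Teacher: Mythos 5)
Your proof is correct and follows essentially the same route as the paper: reduce $L$ to a ratio-based form $g(d/\theta)$ via scale invariance, then translate scale symmetry into the identity $g(r)=g(1/r)$. You simply spell out the substitution $c=1/\theta$ and the verification of the converse, which the paper's terser proof leaves implicit.
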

\begin{proof}
A loss function $L(\theta,d)$ is scale invariant if and only if it is
ratio-based, i.e. if and only if there exists a scalar function $g(x),
x>0$ such that $L(\theta,d)=g(d/\theta)$. 
Scale invariance therefore implies
$L(\theta,d)=g(d/\theta)$
for some $g$, whereas by Definition~(\ref{defscale}) 
scale symmetry implies
$L(\theta,d)=L(\theta,\theta^2/d)$ and vice versa. 
\end{proof}

It follows that the squared loss function~(\ref{quad}) is not scale symmetric, is not scale invariant and does not penalize all boundaries for a scale parameter.

\subsection{Symmetric Loss Functions on the Positive Real Line}

In the modern statistical literature, the inadequacy of
difference-based loss functions, like the squared error loss, for
estimating certain positive quantities has often been recognized~\citep{stein1961,stein}. Several alternative loss functions have been proposed, the
best-known being the normalized squared loss function proposed by~\cite{stein}
$$
L_{nq} (\theta,d) = \left(\frac{d}{\theta} -1\right)^2, 
$$
Stein's loss (or an entropy loss function) 
$$
L_S (\theta,d) = \frac{d}{\theta} - 1 - \log \left( \frac{d}{\theta} \right)
$$
and Brown's loss function~\citep{brown} itself, $\mathcal{D}_{+}(\theta,d)$. One can check that all of functions above are scale invariant, but only Brown's loss function is scale symmetric and infinitely penalizes the boundary decisions.  Unfortunately, Brown's loss function is not {\em convex}, a feature of loss functions which is often required to represent risk aversion and for the sake of regularizing the associated minimization problems. Another unpleasant consequence of non-convexity is that the Bayes estimator associated to Brown's loss function is usually difficult to calculate. Below we propose simple alternative loss functions which share the desirable properties of a loss function on the positive line and have explicit Bayes estimators.

We propose a family of loss functions defined for $k>0$ as
\begin{equation}
L_k (\theta,d) = \left( \frac{d}{\theta} \right)^k + 
\left( \frac{\theta}{d} \right)^k  -2
\label{mauroloss}
\end{equation}
which are scale symmetric, scale invariant, convex, and which tend to infinity at the boundaries. {Expression~(\ref{mauroloss}) is a function of the ratio $\frac{d}{\theta}$ to make it scale symmetric, and it satisfies Lemma~\ref{lemma} to make it scale invariant.} {The constant 2  is subtracted so the minimum value of the loss function $L_k=0$ is attained at $d=\theta$.} In this paper, we focus on the case $k=1$
\begin{equation}
L_1(\theta,d) = \frac{(d-\theta)^2}{\theta d}
\label{scaleinvariantloss}
\end{equation}
which can be considered as a modification of the squared error loss function. The {numerator} is again the squared distance, but the denominator guarantees the infinite penalization for $d=0$. It is easy to see that the loss function~(\ref{scaleinvariantloss}) is a scale invariant version of the precautionary loss function~(\ref{lossscale}). 



\subsection{Scale Means (the Minimizers) and Scale Variances}
 
Within the Bayesian approach, $\theta$ is a random {variable}
with a distribution which conveys the uncertainty the researcher 
has in a given state of information (whether prior,
posterior, elicited, objective and so on).
In such a scenario, a point summary of the distribution of 
$\theta$ minimizing the risk (i.e. the expected loss)
associated with a given loss function is often required.
Such a minimizer of an expected $d$ is usually called a Bayes estimator.
When a scale symmetric loss function is used, we propose to call
such minimizers \textit{scale means.} 
In case of convex loss functions, such as the novel ones 
listed in the previous section, 
minimization can be performed explicitly, as in Theorem~\ref{scalethm}.

\begin{thm}
\label{scalethm}
Let $\theta$ be a positive random variable with a posterior density function $f$ and such that $\mathbb{E}(\theta^k)<\infty$ and $\mathbb{E}(\theta^{-k})<\infty$, where $\mathbb{E}$ denotes the posterior mean with respect to $f$ and $k>0$. Then,

\textbf{(a)}  Expectation of the loss function $L_k (\theta,d)$~(\ref{mauroloss}) with respect to~$f$ is minimized by the Bayes estimator (scale mean)
\begin{equation}
\label{estimate}
\hat{d}_k=\left(\frac{\mathbb{E}(\theta^k)}{\mathbb{E}(\theta^{-k})}\right)^{\frac1{2k}}.
\end{equation}

\textbf{(b)} Expectation of the precautionary loss function $L_{sq}$~(\ref{lossscale}) is minimized by the Bayes estimator (scale mean)
\begin{equation}
\hat{d}_{sq}=\sqrt{\mathbb{E}(\theta^2)},
\label{scaleestimator}
\end{equation}
for which the following bound holds:
$\hat{d}_{sq} \geq \mathbb{E}(\theta)$.
\label{theoremscale}
\end{thm}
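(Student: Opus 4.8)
The plan is to reduce each minimization to a one-dimensional problem in $d$ by pushing the posterior expectation through the (finite) loss and then minimizing an elementary function of $d$. For part \textbf{(a)}, I would first substitute the definition of $L_k$ and use linearity of $\mathbb{E}$ to write the risk as
$$R_k(d) = \mathbb{E}(\theta^{-k})\, d^k + \mathbb{E}(\theta^k)\, d^{-k} - 2,$$
which is well-defined precisely because the hypotheses guarantee $\mathbb{E}(\theta^k)<\infty$ and $\mathbb{E}(\theta^{-k})<\infty$. Writing $A=\mathbb{E}(\theta^{-k})$ and $B=\mathbb{E}(\theta^k)$, both strictly positive, I would apply the AM--GM inequality to the two summands $A d^k$ and $B d^{-k}$: their mean dominates the geometric mean $\sqrt{AB}$, with equality if and only if $Ad^k=Bd^{-k}$, i.e. $d^{2k}=B/A$. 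Solving gives exactly $\hat d_k=\bigl(\mathbb{E}(\theta^k)/\mathbb{E}(\theta^{-k})\bigr)^{1/(2k)}$, and as a by-product the minimal risk equals $2(\sqrt{AB}-1)\ge 0$, the last inequality following from Cauchy--Schwarz since $\mathbb{E}(\theta^k)\mathbb{E}(\theta^{-k})\ge(\mathbb{E}\,1)^2=1$.

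For part \textbf{(b)}, I would expand the precautionary loss as $L_{sq}(\theta,d)=d-2\theta+\theta^2/d$ and again pass the expectation through, obtaining $R_{sq}(d)=d-2\mathbb{E}(\theta)+d^{-1}\mathbb{E}(\theta^2)$. Applying AM--GM to $d$ and $d^{-1}\mathbb{E}(\theta^2)$ immediately yields the minimizer $\hat d_{sq}=\sqrt{\mathbb{E}(\theta^2)}$; equivalently one can differentiate and solve $R_{sq}'(d)=1-d^{-2}\mathbb{E}(\theta^2)=0$, with $R_{sq}''(d)=2d^{-3}\mathbb{E}(\theta^2)>0$ confirming a minimum. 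The claimed bound $\hat d_{sq}\ge\mathbb{E}(\theta)$ then follows from non-negativity of the posterior variance, i.e. $\mathbb{E}(\theta^2)\ge(\mathbb{E}\theta)^2$, together with $\theta>0$ so that $\sqrt{\mathbb{E}(\theta^2)}\ge|\mathbb{E}\theta|=\mathbb{E}\theta$.

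The one point requiring care — and the step I would treat as the main obstacle — is certifying that the single stationary point is the \emph{global} minimizer rather than merely a stationary or local one. One cannot simply invoke convexity of $L_k$ in $d$: for $0<k<1$ the map $d\mapsto d^k$ is concave, so the integrand is not convex in $d$ and the ``expectation of a convex function is convex'' shortcut is unavailable for general $k$. The AM--GM formulation sidesteps this entirely, since it certifies a global lower bound attained at a unique point for every $k>0$ uniformly; alternatively one argues directly that $R_k(d)\to+\infty$ as $d\to 0^+$ and as $d\to\infty$, so the continuous $R_k$ attains a global minimum at an interior critical point, while the equation $Ad^{2k}=B$ has a unique positive root. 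Either route closes the gap without leaning on a pointwise convexity claim that fails for small $k$.
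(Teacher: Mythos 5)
Your proposal is correct, and the computational core is the same as the paper's: both reduce the risk to the elementary function $R_k(d)=\mathbb{E}(\theta^{-k})\,d^{k}+\mathbb{E}(\theta^{k})\,d^{-k}-2$ (and $R_{sq}(d)=d-2\mathbb{E}(\theta)+d^{-1}\mathbb{E}(\theta^{2})$ in part (b)), and both derive the bound $\hat d_{sq}\ge\mathbb{E}(\theta)$ from $\mathbb{E}(\theta^{2})\ge(\mathbb{E}\theta)^{2}$ (the paper phrases this via Jensen, you via non-negativity of the variance --- the same fact). Where you differ is in how the minimizer is certified: the paper differentiates, solves the first-order condition, and stops, implicitly treating the unique stationary point as the global minimizer; you instead apply AM--GM to the two terms, which yields the minimizer, its uniqueness, and the global lower bound in one stroke, and as a bonus produces the minimal risk $2\bigl(\sqrt{\mathbb{E}(\theta^{k})\mathbb{E}(\theta^{-k})}-1\bigr)$, which is exactly the quantity the paper later introduces as the scale variance $\hat\tau_k$. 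Your cautionary remark about convexity is well taken and identifies a small gap in the paper's surrounding discussion: the paper asserts that the family $L_k$ is convex for all $k>0$, but $d\mapsto(d/\theta)^{k}$ is concave for $0<k<1$ and the risk is eventually concave in $d$ there, so the paper's stationary-point argument really needs either your AM--GM certificate or the coercivity-plus-unique-critical-point argument you sketch to conclude global optimality for all $k>0$. Either of your two routes closes that gap cleanly; the paper's proof as written does not address it.
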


\begin{proof}
{\textbf{(a)} The expectation of the loss function~(\ref{mauroloss}) with respect to the posterior density function $f$ takes the form
$$\mathbb{E}(L_k(\theta,d))= \mathbb{E} \left(\frac{d}{\theta} \right)^k + \mathbb{E} \left( \frac{\theta}{d} \right)^k -2 = d^k \mathbb{E} \left( \theta^{-k} \right) + d^{k-1} \mathbb{E} \left( \theta^k \right)-2.$$
Then, the decision $d$ minimising the expected loss function is found solving
$$ \frac{ \partial \mathbb{E}(L_k(\theta,d))}{\partial d} = k d^{k-1} \mathbb{E}\left(\theta^{-k} \right) - k d^{-k-1} \mathbb{E} \left(\theta^k \right) =0$$
This results in
$\hat{d}_k=\left(\frac{\mathbb{E}(\theta^k)}{\mathbb{E}(\theta^{-k})}\right)^{\frac1{2k}},$
and in the special case of $k=1$,
$\hat{d}_1~=~\sqrt{\mathbb{E}(\theta)/\mathbb{E}(\theta^{-1})}.$}

{\noindent \textbf{(b)} Similarly to the previous point, the expectation of the precautionary loss function~(\ref{lossscale}) with respect to the posterior density function $f$ taken the form
$$\mathbb{E}(L_{sq}(\theta,d))= \mathbb{E} \left( \frac{(d-\theta)^2}{d} \right)= \frac{d^2-2d\mathbb{E}(\theta) + \mathbb{E}(\theta^2)}{d}.$$
Then, the decision $d$ minimising the expected loss function is found by
$ \frac{ \partial \mathbb{E}(L_{sq}(\theta,d))}{\partial d} = 0.$
This results in
$ \hat{d}_{sq}=\sqrt{\mathbb{E}(\theta^2)}.$  Using Jensen inequality for $\theta^2$ one can obtain 
$\mathbb{E}\left(\theta^2 \right) \geq \mathbb{E}^2\left( \theta\right).$
Applying the squared root to both sides of the inequality the result immediately follows.}
\end{proof}



In a more fundamental Bayesian approach, a Bayes estimator is regarded
only as a convenient summary of the posterior, and a loss function as
a way to prescribe what kind of summary is appropriate. Typically, a
posterior expectation is used as the Bayes estimator, implying that a
squared loss function is being used. A second step is usually taken
to accompany the Bayes estimator with a measure of uncertainty of the
posterior.  If a posterior mean is used, a posterior variance is
usually presented. However, if a scale symmetric loss function is considered
to be a reasonable criterion for choosing an estimator, i.e. a number
which minimizes a posterior expected loss, then it is also reasonable
to present the achieved minimum of the posterior expected loss as a
second summary of the posterior. For the given loss functions~(\ref{mauroloss}) and~(\ref{lossscale}), particularly simple expected
posterior losses can be obtained. 
In particular, for the loss function~(\ref{mauroloss}) such
{\em scale variance}
of order $k$ of the random
variable $\theta$ in Theorem~\ref{theoremscale}(a) can be written 
$
\hat{\tau}_k (\theta) := 2\sqrt{\mathbb{E}(\theta^k) \mathbb{E}(\theta^{-k})} - 2,
$
whereas the scale variance 
for the precautionary loss function~(\ref{lossscale}) in Theorem~\ref{theoremscale}(b) 
is
$\hat{\tau}(\theta)=2 \left(\sqrt{\mathbb{E}(\theta^2)} - \mathbb{E}(\theta) \right). $

\section{Interval Symmetry}
The approach used above for a positive parameter can be generalized to the
parameter defined on the interval
$(a,b)$. The issue of a restricted parameter space is not
usually discussed in the choice of the loss function and corresponding Bayes estimator: bounds are taken into account through the prior specification only, then the squared loss function and posterior mean (the corresponding Bayes estimator) are used~\citep{restricted}. Such solutions can be suboptimal if boundary decisions are to be avoided. Below, we define the property of the symmetry on an interval and show that the novel definition generalizes the cases of parameters on the whole real line and on the positive real line. We provide the loss function with desirable properties which is, again, a generalization of the squared loss function and the precautionary loss function.

\subsection{Symmetric Loss Functions on Intervals}
Let us consider an inferential problem for which the parameter of interest lies
in a particular interval $(a,b)$. 
Define the following transformation
\begin{equation}
{\rm logit}_{(a,b)}(x)=\log \frac{x-a}{b-x}
\label{logitab}
\end{equation}
where $a<x<b$. Notice that, for $a=0$ and $b=1$, transformation
(\ref{logitab}) reduces to the common logit transformation {widely used in Statistics, and was used by~\cite{ait} to justify the definition of distance on the unit interval.}
%
%
Following the same lines of reasoning as in Section 2.2, 
we {use this transformation} to introduce the definition
of  {\em symmetric on the interval $(a,b)$} loss function (or
simply {\em interval symmetric} loss function), {and demonstrate why it is a convenient choice.}
\begin{defin}
  A loss function $L(\theta,d)$ is symmetric on the interval
  $(a,b)$ if, for every choice of 
$d_1, d_2 \in (a,b)$
  and $\theta$ $\in (a,b)$ 
\begin{equation}
{\rm logit}_{(a,b)}(\theta)=\frac{{\rm logit}_{(a,b)}(d_1) + {\rm logit}_{(a,b)}(d_2)}{2}.
\label{intervalsymmetry}
\end{equation}
implies $L(\theta,d_1)=L(\theta,d_2)$.
\label{definterval}
\end{defin}
In other words, two decisions $d_1$ and $d_2$ should be penalized
equally if the mean of their logit transformation is equal to the
logit transformation of $\theta$. Lemma~\ref{equiva}  justifies the use of the logit transformation~(\ref{logitab}).
\begin{lemma}
Definition \ref{definterval} is equivalent to Definition
\ref{defsymmetry} when $a~\to~-\infty$ and 
$b~\to~+\infty$ and equivalent  to 
Definition \ref{defscale} when $a~\to~0$ and $b~\to~+\infty$.
\label{equiva}
\end{lemma}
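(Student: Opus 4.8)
The plan is to verify the two limiting cases by taking the appropriate limits of the defining symmetry condition~(\ref{intervalsymmetry}) and showing that it collapses to the arithmetic-mean condition~(\ref{symmetry}) in the first case and to the geometric-mean condition~(\ref{scalesymmetry}) in the second. The key observation is that the statement is really about the transformation ${\rm logit}_{(a,b)}$ itself: since each of Definitions~\ref{defsymmetry}, \ref{defscale}, and~\ref{definterval} declares two decisions symmetric precisely when $\theta$ equals the appropriate mean of $d_1$ and $d_2$, it suffices to track what the ``mean defined by ${\rm logit}_{(a,b)}$'' becomes in each limit. Concretely, condition~(\ref{intervalsymmetry}) is equivalent to
$$
\frac{\theta-a}{b-\theta} = \sqrt{\frac{(d_1-a)(d_2-a)}{(b-d_1)(b-d_2)}},
$$
obtained by exponentiating both sides, and I would work from this algebraic form.

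First I would treat the case $a\to 0$, $b\to+\infty$. Setting $a=0$ and letting $b\to+\infty$, each factor $(b-x)/b \to 1$, so after dividing numerator and denominator by the appropriate powers of $b$ the displayed relation reduces to $\theta/1 = \sqrt{d_1 d_2}$, i.e. $\theta=\sqrt{d_1 d_2}$. This is exactly the rewritten form of the scale-symmetry condition~(\ref{scalesymmetry}) noted right after Definition~\ref{defscale}. Hence for $d_1,d_2,\theta\in\mathbb{R}_+$ the interval-symmetry condition and the scale-symmetry condition coincide, so they impose the same requirement $L(\theta,d_1)=L(\theta,d_2)$, establishing equivalence in this limit.

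Next I would treat the case $a\to-\infty$, $b\to+\infty$. Here it is cleaner to return to~(\ref{intervalsymmetry}) in logarithmic form and expand ${\rm logit}_{(a,b)}(x)=\log(x-a)-\log(b-x)$ for $x$ fixed and $|a|,b$ large. Writing $x-a = -a\bigl(1 - x/a\bigr)$ and $b-x = b\bigl(1-x/b\bigr)$ and using $\log(1+u)\approx u$, the leading constant terms $\log(-a)$ and $\log b$ cancel on both sides of~(\ref{intervalsymmetry}) because they appear identically in the single term on the left and in the averaged pair on the right, and the surviving first-order terms give $\theta = (d_1+d_2)/2$. This is precisely the arithmetic-mean rewriting of~(\ref{symmetry}), so the two conditions again coincide and equivalence with Definition~\ref{defsymmetry} follows.

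The main obstacle is making the second limit rigorous rather than merely formal: the terms $\log(-a)$ and $\log b$ individually diverge, so one must argue carefully that they cancel exactly in the difference defining~(\ref{intervalsymmetry}) before passing to the limit, and that the remaining $O(1/a)$ and $O(1/b)$ corrections can be controlled uniformly for $\theta,d_1,d_2$ in a fixed bounded set. The clean way to handle this is to subtract the two sides of~(\ref{intervalsymmetry}) symbolically, observe that all genuinely divergent contributions enter both sides with equal coefficients and therefore drop out identically, and only then expand the finite remainder and take limits; this avoids manipulating divergent quantities directly and turns the argument into a routine Taylor estimate.
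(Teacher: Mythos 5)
Your argument is correct and rests on the same underlying idea as the paper's proof --- each of the three definitions declares $d_1,d_2$ symmetric exactly when $\theta$ equals the mean of $d_1,d_2$ induced by the relevant transformation, so one only needs to identify what the ${\rm logit}_{(a,b)}$-mean becomes in each limit --- but the execution differs in one substantive way. The paper solves condition~(\ref{intervalsymmetry}) explicitly for $\theta$, obtaining the closed form
$$\theta=f(a,b,d_1,d_2)=\frac{ab-d_1d_2+\sqrt{(d_1-a)(b-d_1)(d_2-a)(b-d_2)}}{a+b-d_1-d_2},$$
and then simply computes $\lim f=(d_1+d_2)/2$ and $\lim f=\sqrt{d_1d_2}$ in the two regimes; both limits are handled by the same mechanism and the limit is always taken of a single finite, well-defined expression. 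Your treatment of the case $a\to 0$, $b\to+\infty$ is equally clean and is essentially the exponentiated version of the same computation. Where the routes genuinely diverge is the case $a\to-\infty$, $b\to+\infty$: after the constants $\log(-a)-\log b$ cancel, the surviving exact condition is $\log\frac{1-\theta/a}{1-\theta/b}=\frac{1}{2}\log\frac{(1-d_1/a)(1-d_2/a)}{(1-d_1/b)(1-d_2/b)}$, and \emph{both} sides tend to zero, so the naive limit is the vacuous identity $0=0$. To extract $\theta=(d_1+d_2)/2$ you must divide through by the nonvanishing small factor $1/b-1/a$ before passing to the limit and check that the quadratic Taylor remainders are $o(1/b-1/a)$ uniformly on bounded sets; you flag this obstacle honestly but your sketched resolution (cancel divergences, then ``expand and take limits'') still lands on $0=0$ unless the normalization is made explicit. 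The paper's closed-form route avoids this degeneracy entirely, which is what it buys over yours; your route buys a direct derivation from the defining equation that does not require producing and verifying the explicit solution $f$.
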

\begin{proof}
Condition (\ref{intervalsymmetry}) for $a<d_1<d_2<b$ can be rewritten
$$\theta=f(a,b,d_1,d_2) \equiv \frac{ab -d_1d_2 + \sqrt{(d_1-a)(b-d_1)(d_2-a)(b-d_2)}}{a+b-d_1-d_2}.$$
Obviously, $\theta$ is a symmetric function of $d_1$ and $d_2$. Considering two limits
$$\lim_{a \to -\infty, \ b \to +\infty}f(a,b,d_1,d_2)= \frac{d_1+d_2}{2}, \ \lim_{a \to 0, \ b \to +\infty}f(a,b,d_1,d_2)= \sqrt{d_1 d_2},$$
it can be easily seen that the definitions are equivalent. 
\end{proof}
\noindent It follows from Lemma~\ref{equiva} that Definition~\ref{definterval}
is a convenient generalization of the definition of 
symmetry and of scale symmetry.


\subsection{An Interval Symmetric Loss Function
and a Bayes estimator}

As in the case of a positive parameter and scale symmetric loss
functions, discussed in Section 2, the approach of
\cite{brown} of specifying a squared loss function 
after rescaling the interval $(a,b)$ to the real
line via, for example, the logit transformation~(\ref{logitab})
provides the loss function 
\begin{equation}
L_{iB}(\theta,d)=\left( {\rm logit}_{(a,b)}d - {\rm logit}_{(a,b)} \theta \right)^2.
\label{aitdistance}
\end{equation}
On the unit interval this loss function  is equivalent to so-called Aitchison distance proposed by~\cite{ait} for parameters defined on a simplex. However, the loss function~(\ref{aitdistance}) is not convex and its minimization problem does not have an explicit 
solution. As an alternative, we propose the following loss function 
\begin{equation}
L_{iq}(\theta,d)=\frac{(d-\theta)^2}{(d-a)(b-d)}.
\label{intervalloss}
\end{equation}
which is interval symmetric and tends to infinity when the decision $d$ 
tends to bounds  $a$ and $b$. 
     
Note that loss function~(\ref{intervalloss}) for $a=0$ and $b=1$ looks similar
to the well-known loss function
$(d-\theta)^2/(\theta(1-\theta))$
which, however, does not penalise boundary decisions. 

 
\noindent The Bayes estimator corresponding to $L_{iq}$ is given in Theorem~\ref{intervaltheorem}.

\begin{thm}
Let $\theta \in (a,b)$ be a random variable with a posterior density function $f$ and
$\mathbb{E}(\theta^2)<\infty$ where $\mathbb{E}(\cdot)$ denotes the expectation with respect to $f$. Then,

\textbf{{(a)}} the expectation of the interval symmetric loss function $L_{iq}$~(\ref{intervalloss}) with respect to~$f$ is  minimized by the Bayes estimator
\begin{equation}
  \hat{d}_{iq}=\frac{ab-\mathbb{E}(\theta^2)+\sqrt{(\mathbb{E}(\theta^2)-ab)^2
-(a+b-2\mathbb{E}(\theta))(2ab\mathbb{E}(\theta)-(a+b)
\mathbb{E}(\theta^2))}}{a+b-2\mathbb{E}(\theta)}
\label{intervalestimator}
\end{equation}

\textbf{{(b)}} In the limiting case $a \to -\infty$ and $b \to
+\infty$ estimator~(\ref{intervalestimator})~minimizes the expectation of squared
loss function~(\ref{quad}), and in the limiting case $a \to 0$ and $b \to +\infty$ estimator~(\ref{intervalestimator}) minimizes the expectation of precautionary loss function~(\ref{lossscale}).
\label{intervaltheorem}
\end{thm}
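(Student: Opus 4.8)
The plan is to mirror the proof of Theorem~\ref{scalethm}: write down the posterior expected loss, set its derivative to zero, and solve, while paying attention to two points that do not arise in the scale case, namely which root of the resulting quadratic is the minimizer in part~(a), and how to resolve the indeterminate limits in part~(b).

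For part~(a) I would first note that the denominator $(d-a)(b-d)$ of $L_{iq}$ in~(\ref{intervalloss}) is free of $\theta$, so it leaves the expectation and
$$\mathbb{E}(L_{iq}(\theta,d))=\frac{d^2-2d\,\mathbb{E}(\theta)+\mathbb{E}(\theta^2)}{(d-a)(b-d)}.$$
Writing $m_1=\mathbb{E}(\theta)$ and $m_2=\mathbb{E}(\theta^2)$ and differentiating by the quotient rule, the cubic terms of the numerator cancel and the first-order condition becomes the quadratic
$$(a+b-2m_1)\,d^2+2(m_2-ab)\,d+\bigl(2ab\,m_1-(a+b)m_2\bigr)=0,$$
whose two solutions are precisely~(\ref{intervalestimator}) with a $\pm$ in front of the square root. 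The work then is to single out the $+$ root and to confirm it lies in $(a,b)$.

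I expect the root selection to be the main obstacle, and I would handle it through the boundary behaviour rather than a second-order test. Since the posterior is supported in $(a,b)$, the numerator $\mathbb{E}((d-\theta)^2)=(d-m_1)^2+\mathrm{Var}(\theta)$ is bounded below by a positive constant on $[a,b]$ while $(d-a)(b-d)\to 0$ at the endpoints, so $\mathbb{E}(L_{iq})\to+\infty$ as $d\to a^+$ and as $d\to b^-$. Hence a global minimizer exists in the interior and solves the quadratic. Because the derivative is a positive multiple of the quadratic $p(d)$, its sign is the sign of $p$; and since the leading coefficient $A=a+b-2m_1$ can be of either sign (according as the posterior mean lies below or above the midpoint), I would run a short sign analysis near each endpoint showing that $p$ changes from negative to positive exactly once in $(a,b)$, always at the root carrying the $+$ sign. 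The borderline case $m_1=(a+b)/2$ makes $A=0$ and reduces the condition to a linear equation, to be treated as a limit of the generic case.

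For part~(b) the limits in~(\ref{intervalestimator}) are genuinely indeterminate because the denominator $a+b-2m_1$ diverges, so the crux is to simplify the discriminant. Setting $\alpha=m_1-a$, $\beta=b-m_1$ and $v=\mathrm{Var}(\theta)=m_2-m_1^2$, I anticipate that the quantity under the root factors as $(\alpha^2+v)(\beta^2+v)$, which recasts~(\ref{intervalestimator}) in the transparent form
$$\hat{d}_{iq}=m_1+\frac{\sqrt{(\alpha^2+v)(\beta^2+v)}-(\alpha\beta+v)}{\beta-\alpha}.$$
From here both limits reduce to first-order expansions of the square root. As $a\to-\infty$ and $b\to+\infty$ we have $\alpha,\beta\to+\infty$, and the expansion yields $\hat{d}_{iq}-m_1=O\bigl((\beta-\alpha)/(\alpha\beta)\bigr)\to0$, so $\hat{d}_{iq}\to\mathbb{E}(\theta)$, the minimizer of the squared loss~(\ref{quad}). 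As $a\to0$ and $b\to+\infty$ we have $\alpha\to m_1$ and $\beta\to+\infty$, and the same expansion gives $\hat{d}_{iq}\to m_1+\sqrt{m_1^2+v}-m_1=\sqrt{m_2}=\sqrt{\mathbb{E}(\theta^2)}$, which is exactly the precautionary-loss minimizer of Theorem~\ref{scalethm}(b). Establishing the factorization of the discriminant is the single step that makes both parts tractable, so it is where I would concentrate the effort.
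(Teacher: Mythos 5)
Your proposal is correct and follows essentially the same route as the paper: differentiate the posterior expected loss in $d$, solve the resulting quadratic for part (a), and pass to the limits for part (b). The paper's own proof states these two steps in a single line each, so your additional work — the boundary argument selecting the $+$ root and the factorization of the discriminant as $(\alpha^2+v)(\beta^2+v)$ (which checks out) — simply supplies details the paper leaves implicit.
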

\begin{proof}

\textbf{{(a)}} The equality is proved by differentiating in $d$ the expected losses $L_{iq}$~(\ref{intervalloss}).

 \textbf{{(b)}} Denote the estimator~(\ref{intervalestimator}) 
by $d \equiv g(a,b,\theta)$; then, taking the limits
$$\lim_{a \to -\infty, \ b \to +\infty}g(a,b,\theta)= \mathbb{E}(\theta), \ \lim_{a \to 0, \ b \to +\infty}g(a,b,\theta)= \sqrt{\mathbb{E}(\theta^2)},$$
it is easy to see that the obtained estimators are equivalent to
the minimizers of the squared loss function~(\ref{quad}) and of the precautionary loss function~(\ref{intervalloss}), respectively. Note that $\hat{d}_{iq} \to \frac{a+b}{2} \ {\rm as} \ \mathbb{E}(\theta) \to \frac{a+b}{2}$.
\end{proof}

It follows from Theorem~\ref{intervaltheorem} that the Bayes estimator $\hat{d}_{iq}$ includes the Bayes estimator under squared loss function~(\ref{quad}) and precautionary loss function~(\ref{lossscale}) as special cases. 

\section{Multivariate Generalizations}

The definition of symmetry can be generalized to the 
case of a parameter belonging to a subset of $\mathbb{R}^{m}$
by applying the same ideas to selected shapes of the parameter
space as in the following definition.

\begin{defin}
\label{multidef}
Let $\boldsymbol{\theta}=(\theta^{(1)},\theta^{(2)},\ldots,\theta^{(m)})^{\rm T}$ be a parameter lying in one of the 
parameter spaces $\Theta \subset \mathbb{R}^m$ listed below. Let $\boldsymbol{d_i}=(d_i^{(1)},d_i^{(2)},\ldots,d_i^{(m)})^{\rm T}$, $i=1,2$ be two vectors of decisions defined on the same parameter space.
A loss function $L(\boldsymbol{\theta}, {\bf d})$ is a
 multivariate $\Theta-$symmetric if the equality
$$L(\boldsymbol{\theta},{\bf d_1}) = L(\boldsymbol{\theta},{\bf d_2})$$
is implied by each triple  $\boldsymbol{\theta}, {\bf d_1}, {\bf d_2} \in \Theta$ satisfying the following respective definitions of distances:
\renewcommand\labelenumi{\theenumi}
\renewcommand{\theenumi}{(\alph{enumi})}
\begin{enumerate}
\item when $\Theta = \mathbb{R}^m$  (symmetry on $\mathbb{R}^m$ itself):  
$$\sqrt{ \sum_{j=1}^m \left( d_1^{(j)}-\theta^{(j)} \right)^2}=\sqrt{ \sum_{j=1}^m \left( d_2^{(j)}-\theta^{(j)} \right)^2};$$

\item
when $\Theta = \mathbb{R}^m_{+} = \{\theta: \theta^{(i)} >0,
  i=1, \ldots, m\}$ (scale symmetry on $\mathbb{R}^m_{+}$):
$$
\sqrt{ \sum_{j=1}^m \log^2 \left( \frac{d_1^{(j)}}{\theta^{(j)}} \right)}=\sqrt{\sum_{j=1}^m \log^2 \left( \frac{d_2^{(j)}}{\theta_j} \right)};
$$

\item
 when $\Theta = \{\theta: (a_1 < \theta^{(1)} < b_1), \ldots, 
(a_m < \theta^{(m)}  < b_m) \}$ (symmetry on  an $\mathbb{R}^m$-rectangle):
$$\sqrt{  \sum_{j=1}^m \left( {\rm logit}_{(a_j,b_j)} d_1^{(j)}-{\rm logit}_{(a_j,b_j)}\theta^{(j)} \right)^2}=\sqrt{  \sum_{j=1}^m \left( {\rm logit}_{(a_j,b_j)} d_2^{(j)}-{\rm logit}_{(a_j,b_j)}\theta^{(j)} \right)^2};$$ 
\item
when $\Theta = \{\theta:  \theta^{(1)}>0, \ \theta^{(2)}>0, \ \ldots,
\theta^{(m)}>0; \sum_{i=1}^m \theta^{(i)}=1 \}$ 
(symmetry on the unit simplex):
$$\sqrt{ \frac{1}{m} \sum_{i<j} \left(\log \frac{d^{(i)}_1}{d^{(j)}_1} - \log \frac{\theta^{(i)}}{\theta^{(j)}} \right)^2 }=\sqrt{ \frac{1}{m} \sum_{i<j} \left(\log \frac{d^{(i)}_2}{d^{(j)}_2} - \log \frac{\theta^{(i)}}{\theta^{(j)}} \right)^2 }.$$ 

\end{enumerate}

\end{defin}
\noindent The definition of the symmetric loss function in each case employs a distance corresponding to the particular restricted space. While the distances in (a)~-~(c) are natural extensions of the previously used, the definition in (d) is less straightforward.  Definition~\ref{multidef}(d) uses the Aitchison distance proposed by~\cite{ait} and employed in compositional data analysis. Regarding properties of the proposed definition, Lemma~\ref{equivamulti}, similar to Lemma~\ref{equiva}, holds.

\begin{lemma}
\label{equivamulti}
  Let $\boldsymbol{\theta}=(\theta^{(1)},\theta^{(2)},\ldots,\theta^{(m)})^{\rm T}$ be a vector
  of parameter of interest such that $\theta^{(1)} \in (a_1,b_1), \
  \theta^{(2)} \in (a_2,b_2), \ldots, \theta^{(m)} \in (a_m,b_m)$ and $\boldsymbol{d_i}=(d_i^{(1)},d_i^{(2)},\ldots,d_i^{(m)})^{\rm T}$ be a vector of corresponding
  decisions lying in corresponding intervals.  
Definition~\ref{multidef}(c) is equivalent to Definition~\ref{multidef}(a)
  when $a_i \to - \infty$ and $b_i \to \infty$ for all $i=1, \ldots,m$
  and to Definition~\ref{multidef}(b) when $a_i= 0$ and $b_i \to
  \infty$ for all $i=1, \ldots,m$.
\end{lemma}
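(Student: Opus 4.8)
The plan is to exploit the common structure of the three distances appearing in Definition~\ref{multidef}: each is the Euclidean norm of the vector obtained by applying a componentwise monotone transformation to $\mathbf{d}$ and to $\boldsymbol{\theta}$ and subtracting. Writing $T(x)={\rm logit}_{(a_j,b_j)}(x)$ for the transformation~(\ref{logitab}) applied in coordinate $j$, the condition in Definition~\ref{multidef}(c) reads $\|T(\mathbf{d}_1)-T(\boldsymbol{\theta})\|_2=\|T(\mathbf{d}_2)-T(\boldsymbol{\theta})\|_2$, and the conditions (a) and (b) are of exactly the same form with $T$ replaced by the identity and by $\log$, respectively. The key observation I would use is that this is an equality of norms, hence homogeneous: it is unchanged if the common transformed distance is multiplied by any positive constant. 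It therefore suffices to show that, in each limit, the logit-based distance converges --- up to such a common positive factor --- to the corresponding target distance, after which the two families of admissible triples $(\boldsymbol{\theta},\mathbf{d}_1,\mathbf{d}_2)$, and thus the two notions of symmetric loss function, coincide.

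First I would treat the scale limit $a_j=0,\ b_j\to\infty$. A direct coordinatewise computation gives ${\rm logit}_{(0,b_j)}(d)-{\rm logit}_{(0,b_j)}(\theta)=\log\frac{d}{\theta}+\log\frac{b_j-\theta}{b_j-d}$, and since the second term tends to $0$ as $b_j\to\infty$, each coordinate of the logit-difference vector converges to $\log d^{(j)}-\log\theta^{(j)}$. Passing to the Euclidean norm, the distance in Definition~\ref{multidef}(c) converges directly, with no rescaling needed, to the scale distance of Definition~\ref{multidef}(b), so the admissible triples coincide in the limit. Next I would treat the real-line limit, taking the bounds to infinity at a common rate, say $a_j=-R,\ b_j=R$ with $R\to\infty$, the natural multivariate analogue of the joint limit used in Lemma~\ref{equiva}. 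A first-order expansion gives ${\rm logit}_{(-R,R)}(d)-{\rm logit}_{(-R,R)}(\theta)=\frac{2(d-\theta)}{R}+O(R^{-2})$, so multiplying by the common factor $R/2$ and letting $R\to\infty$ sends each coordinate to $d^{(j)}-\theta^{(j)}$. Hence $\frac{R}{2}$ times the logit-based distance converges to the Euclidean distance of Definition~\ref{multidef}(a), and by the homogeneity noted above the equal-distance condition converges to the one in (a), giving the claimed equivalence.

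The step I expect to be the main obstacle is the real-line limit, for two reasons. The logit-based distance itself degenerates to $0$ as the interval expands, so one cannot simply pass to a pointwise limit of the transformation --- one must identify and factor out the correct common scaling ($R/2$ above) before taking the limit. Moreover, the per-coordinate leading coefficient of the expansion is proportional to $\tfrac{1}{|a_j|}+\tfrac{1}{b_j}$, so unless the bounds grow at a common rate across coordinates the rescaling factors differ from coordinate to coordinate and the Euclidean structure is distorted; I would therefore state explicitly that the limit is taken with all bounds diverging at the same rate, which is exactly the joint-limit convention already adopted in Lemma~\ref{equiva}. The scale limit, by contrast, is immediate, since there the transformation converges on the nose and no rescaling is required.
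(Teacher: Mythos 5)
Your argument is correct, but note that the paper itself supplies no proof of Lemma~\ref{equivamulti} --- it is stated as ``similar to Lemma~\ref{equiva}'' and left at that --- so the only available comparison is with the univariate proof of Lemma~\ref{equiva}. There the authors solve the symmetry condition explicitly for $\theta$ as a closed-form function $f(a,b,d_1,d_2)$ and take limits of that formula; that device does not extend to the multivariate case, where the condition in Definition~\ref{multidef}(c) is an equality of norms and cannot be solved for the vector $\boldsymbol{\theta}$. Your route is genuinely different and better adapted: you exploit the common structure ``Euclidean norm of a componentwise monotone transform,'' use the homogeneity of the norm-equality condition to permit a common positive rescaling, and then verify coordinatewise convergence of the (rescaled) transformed differences --- exact convergence of ${\rm logit}_{(0,b_j)}$ to $\log$ in the scale limit, and convergence of $\tfrac{R}{2}\,{\rm logit}_{(-R,R)}$ to the identity difference in the real-line limit. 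Both expansions check out (the error in the symmetric case is in fact $O(R^{-3})$, so your $O(R^{-2})$ bound is safe). Your observation that the real-line limit requires the bounds to diverge at a common rate across coordinates --- otherwise the per-coordinate factors $\tfrac{1}{|a_j|}+\tfrac{1}{b_j}$ produce a weighted, or even degenerate, limiting distance --- is a genuine refinement that the paper's bare statement ``$a_i \to -\infty$ and $b_i \to \infty$ for all $i$'' glosses over; it is worth retaining, since it is exactly the point at which a naive coordinatewise application of Lemma~\ref{equiva} would fail to deliver the unweighted Euclidean condition of Definition~\ref{multidef}(a).
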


\noindent Following~\cite{brown}, all distances in Definition~(\ref{multidef}) could be taken as corresponding symmetric loss functions. For example, in case \textit{(b)}, one could define
\begin{equation}
\mathcal{D}_{+}^{(m)} \left( \boldsymbol{\theta}, \boldsymbol{d} \right) = \sum_{j=1}^{m}  \log^2 \left( \frac{d^{(j)}}{\theta^{(j)}} \right).
\end{equation}
\noindent At the same time, some convex alternatives could be considered when leading to simple solutions of minimization problems. However, the search of the symmetric multivariate generalization of our proposed loss functions $L_k$, $L_{sq}$ and $L_{iq}$ seems to be a non-trivial one. We propose the following loss functions for parameters with non-negative components.
\begin{proposition}
Let $\boldsymbol{\theta}=(\theta^{(1)},\ldots,\theta^{(m)})^{\rm T} \in
  \mathbb{R}^m_{+}$ and  $\boldsymbol{d}=(d^{(1)},\ldots,d^{(m)})^{\rm T}  \in
  \mathbb{R}^m_{+}$. The loss functions
  \begin{equation}
 {L_k^{(m)} \left(\boldsymbol{\theta},\boldsymbol{d} \right) =  \sum_{j=1}^m \left( \left(\frac{d^{(j)}}{\theta^{(j)}} \right)^k - \left(\frac{\theta^{(j)}}{d^{(j)}} \right)^k \right) - 2m}
\label{multiloss}
\end{equation}
\begin{equation}
L_{sq}^{(m)} \left(\boldsymbol{\theta},\boldsymbol{d} \right) =  \sum_{j=1}^m \frac{(d^{(j)}-\theta^{(j)})^2}{d^{(j)}}
\label{multiloss2}
\end{equation}
are additive multivariate generalizations of the loss function $L_1$ given in (\ref{mauroloss}) and (\ref{lossscale}) respectively, which infinitely penalize each boundary decision $d^{(j)}=0$ and $d^{(j)}=\infty $, $j=1,\ldots,m$.
\end{proposition}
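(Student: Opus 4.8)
The plan is to deduce both assertions directly from the univariate analysis of Section~2 by exploiting the fact that each proposed loss is, by construction, a sum over coordinates. Reading ``additive multivariate generalization'' as the statement that $L_k^{(m)}$ and $L_{sq}^{(m)}$ decompose into a coordinatewise sum of the corresponding scalar losses, the first step is to exhibit this decomposition explicitly; the boundary behaviour then follows termwise from the scalar case already recorded in Section~2.3.

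For $L_{sq}^{(m)}$ the decomposition is immediate: each summand in~(\ref{multiloss2}) is precisely the precautionary loss~(\ref{lossscale}) evaluated at $(\theta^{(j)},d^{(j)})$, so $L_{sq}^{(m)}(\boldsymbol{\theta},\boldsymbol{d})=\sum_{j=1}^{m}L_{sq}(\theta^{(j)},d^{(j)})$, and setting $m=1$ returns~(\ref{lossscale}) verbatim. For $L_k^{(m)}$ I would distribute the additive constant, writing $-2m=\sum_{j=1}^{m}(-2)$, so that each bracketed term in~(\ref{multiloss}) becomes $(d^{(j)}/\theta^{(j)})^{k}+(\theta^{(j)}/d^{(j)})^{k}-2=L_k(\theta^{(j)},d^{(j)})$, with $L_k$ as in~(\ref{mauroloss}); hence $L_k^{(m)}(\boldsymbol{\theta},\boldsymbol{d})=\sum_{j=1}^{m}L_k(\theta^{(j)},d^{(j)})$, and the case $m=1$ together with $k=1$ reproduces the scale invariant loss~(\ref{scaleinvariantloss}). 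I note here the one point requiring care: for this reduction to yield $L_k$ (and for the boundary penalty below to hold) the second ratio in~(\ref{multiloss}) must enter with a $+$ sign, matching~(\ref{mauroloss}); with the displayed $-$ sign the $j$-th term would instead tend to $-\infty$ as $d^{(j)}\to 0$, so I read the middle operation as addition throughout.

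With the additive form in hand, I would read off the infinite penalization of the boundaries as a termwise limit. Fixing a coordinate $j_0$ and letting $d^{(j_0)}$ approach a boundary while the remaining components stay in the interior keeps the other $m-1$ summands finite. For $L_k^{(m)}$ I would use that, as $d^{(j_0)}\to 0$, the term $(\theta^{(j_0)}/d^{(j_0)})^{k}\to\infty$, and, as $d^{(j_0)}\to\infty$, the term $(d^{(j_0)}/\theta^{(j_0)})^{k}\to\infty$ (since $k>0$), so in both cases the sum diverges to $+\infty$. For $L_{sq}^{(m)}$ the $j_0$-th summand $(d^{(j_0)}-\theta^{(j_0)})^{2}/d^{(j_0)}$ behaves like $(\theta^{(j_0)})^{2}/d^{(j_0)}\to+\infty$ as $d^{(j_0)}\to 0$ and like $d^{(j_0)}\to+\infty$ as $d^{(j_0)}\to\infty$, giving divergence at each boundary. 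These are exactly the univariate limits of Section~2, so the only genuine work is the bookkeeping that isolates the diverging coordinate from the finitely many bounded ones; with that in place the proposition follows.
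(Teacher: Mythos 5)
Your proof is correct and is precisely the argument the paper intends: the paper in fact states this proposition without any proof, treating the coordinatewise decomposition $L_{sq}^{(m)}(\boldsymbol{\theta},\boldsymbol{d})=\sum_{j}L_{sq}(\theta^{(j)},d^{(j)})$ and $L_k^{(m)}(\boldsymbol{\theta},\boldsymbol{d})=\sum_{j}L_k(\theta^{(j)},d^{(j)})$ and the resulting termwise boundary divergence as evident from Section~2. Your remark that the minus sign between the two ratio terms in~(\ref{multiloss}) must be read as a plus --- so as to match~(\ref{mauroloss}) and to make the $j$-th summand tend to $+\infty$ rather than $-\infty$ as $d^{(j)}\to 0$ --- correctly identifies a typographical error in the displayed formula rather than a gap in your argument.
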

Clearly, loss functions $L_1^{(m)}$ and $L_{sq}^{(m)}$ penalize the boundaries as desired and some good performance of the corresponding estimators can be expected. However, the property of symmetry is not satisfied. One can find two decisions $\tilde{\boldsymbol{d_1}}$ and $\tilde{\boldsymbol{d_2}}$ for which $\mathcal{D}_{+}^{(m)} \left( \boldsymbol{\theta}, \tilde{\boldsymbol{d_1}} \right) = \mathcal{D}_{+}^{(m)} \left( \boldsymbol{\theta}, \tilde{\boldsymbol{d_2}}\right)$, but $L_1^{(m)} \left(\boldsymbol{\theta},\tilde{\boldsymbol{d_1}}  \right) \neq L_1^{(m)} \left(\boldsymbol{\theta},\tilde{\boldsymbol{d_2}}  \right)$.
Even if the whole loss functions are not symmetric, they are ``component-wise'' symmetric as shown above. The comparison of loss functions $L_1^{(2)}$, $L_{sq}^{(2)}$  and $\mathcal{D}^{(2)}_{+}$ for different values of decision $\boldsymbol{d_1}$ and $\boldsymbol{d_2}$ and fixed $\boldsymbol{\theta} = (1,2)^{\rm T}$ is given in Figure~\ref{fig:multi}.

\begin{figure}[ht!] 
\centering
\includegraphics[width=1\textwidth]{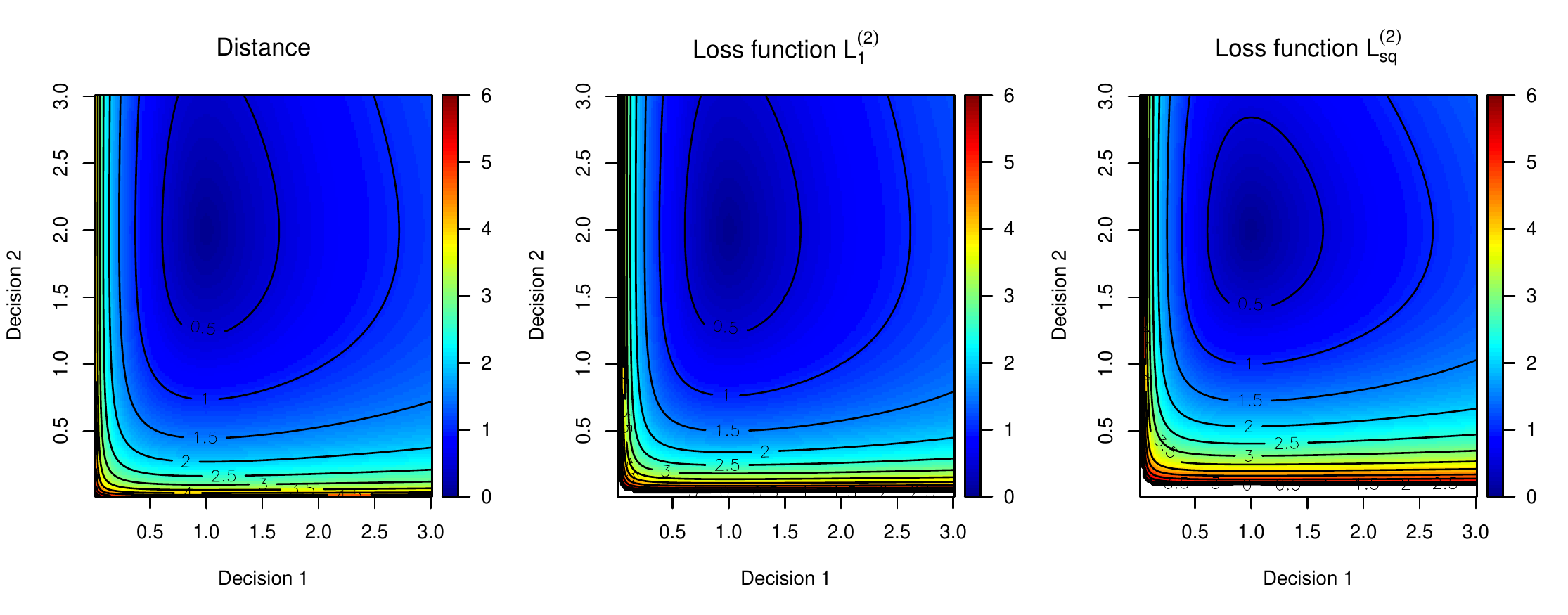}
 \caption{{Contour plots of loss functions $\mathcal{D}_{+}^{(2)}$, $L_1^{(2)}$,  $L_{sq}^{(2)}$ for the case $m=2$ and $\boldsymbol{\theta} = (1,2)^{\rm T}$}. 
\label{fig:multi}}
\end{figure}

The proposed loss functions perform similar to the distance $\mathcal{D}^{(2)}_{+}$, but have some more favourable properties, like convexity. 

\section{Examples}

Loss functions penalising boundary decisions were found to be more beneficial in many applications areas. For instance, \cite{sainthilary2018} have shown that a loss function similar to the loss function~(\ref{mauroloss}) penalising the decision $d=0$ can lead to a more reliable benefit-risk analysis of novel drugs. Similarly, \cite{mozgunov2018crm} have found that the loss function~(\ref{intervalloss}) penalising decisions $d=0,1$ incorporated into a model-based dose-finding design can improve a selection of the optimal doses without exposing patients to excessively toxic doses. Below, we investigate the performance of the proposed loss function and corresponding Bayes estimators in more general settings. We consider {four} classic examples of estimation to demonstrate the essential differences of estimators. We focus on the small sample size ($n=15$) to emphasize the difference in estimators. The results for moderate $(n=100)$ and large $(n=1000)$ sample sizes are given in Supplementary Materials. 

{For all examples,} the frequentist operating characteristic, Mean Squared Error (MSE), is chosen to compare the different estimators on common grounds. {As advocated by~\cite{berger2006case, efron2015frequentist}, studying the frequentist properties of Bayes estimators is a way to study the properties independently of the prior distribution and to consider Bayesian point estimate simply as a function of the data. Furthermore, as we intend to compare several Bayes estimators, which minimise different loss functions, the frequentist characteristics are chosen to assess the performance of these estimators on an equal basis.} Note that this choice is not favourable to our new proposals, since the MSE is derived from squared error loss.

\subsection{Estimation of a Probability \label{sec:probability}}

An important example of a parameter defined on the  finite interval $[0,1]$ 
is a probability. In the presence of a binary random sample with an unknown
probability of success a uniform distribution, i.e. a Beta
prior distribution $\mathcal{B}(1,1)$ 
is often assumed, a proposal which dates back to Laplace. Having observed
$x$ successes out of $n$ trials,
the posterior distribution is a conjugate Beta distribution $\mathcal{B}(x+1,n-x+1)$.
The estimator corresponding to the squared error loss function  (posterior mean) has the form
$
\hat{p}_q=\frac{x+1}{n+2}.
$
Another widely used estimator
 is the so-called ``add two successes and two failures'' 
Agresti-Coull estimator~\citep{agresti}
$\hat{p}_{AC}=\frac{x+2}{n+4}.$ Below we compare these approaches to the newly proposed estimator~(\ref{intervalestimator}). 

The symmetric optimal Bayes estimator~(\ref{intervalestimator}) in the
case $a=0$ and $b=1$ can be written
$$
\hat{d}_{iq}=\frac{\mathbb{E}(\theta^2)-\sqrt{\mathbb{E}(\theta^2)(1-2\mathbb{E}(\theta)+\mathbb{E}(\theta^2))}}{2\mathbb{E}(\theta)-1}$$
where $\theta$ is a probability of success lying
in the interval (0,1), over which a posterior distribution 
is given. It is assumed that the extremes of the interval are
not possible values for the parameter. The first and second moments of a Beta
distribution can be computed explicitly and plugged in formula (\ref{intervalestimator}) to obtain the following interval
symmetric optimal Bayes estimator
\begin{equation}
\hat{p}_{iq}=\left(1+\sqrt{\frac{(n-x+1)(n-x+2)}{(x+1)(x+2)}}\right)^{-1}.
\label{betaestimator}
\end{equation}

Simulated trials with sample size $n=15$ are considered. {On a grid of values $\theta \in (0.01,0.99)$, $N=10^9$ trials were simulated. This means that for each value of $\theta$ on the grid, we simulate $10^9$ trials with the total sample size $n=15$. This results in $10^9$ point estimates found for each method. Then, the MSE is computed as} 
\begin{equation}
MSE_k \equiv \frac{1}{N}\sum_{i=1}^N(\hat{p}_k^{(i)}-\theta)^2,
\label{eq:mse}
\end{equation}
where $\hat{p}_k^{(i)}$ is a corresponding value in the
$i^{th}$ simulation and $k=q,iq,AC$ corresponds to an estimation method.  
%
%
The results are given in Figure \ref{comparisonbeta}.

\begin{figure}[ht!]
  \centering
      \includegraphics[width=1\textwidth]{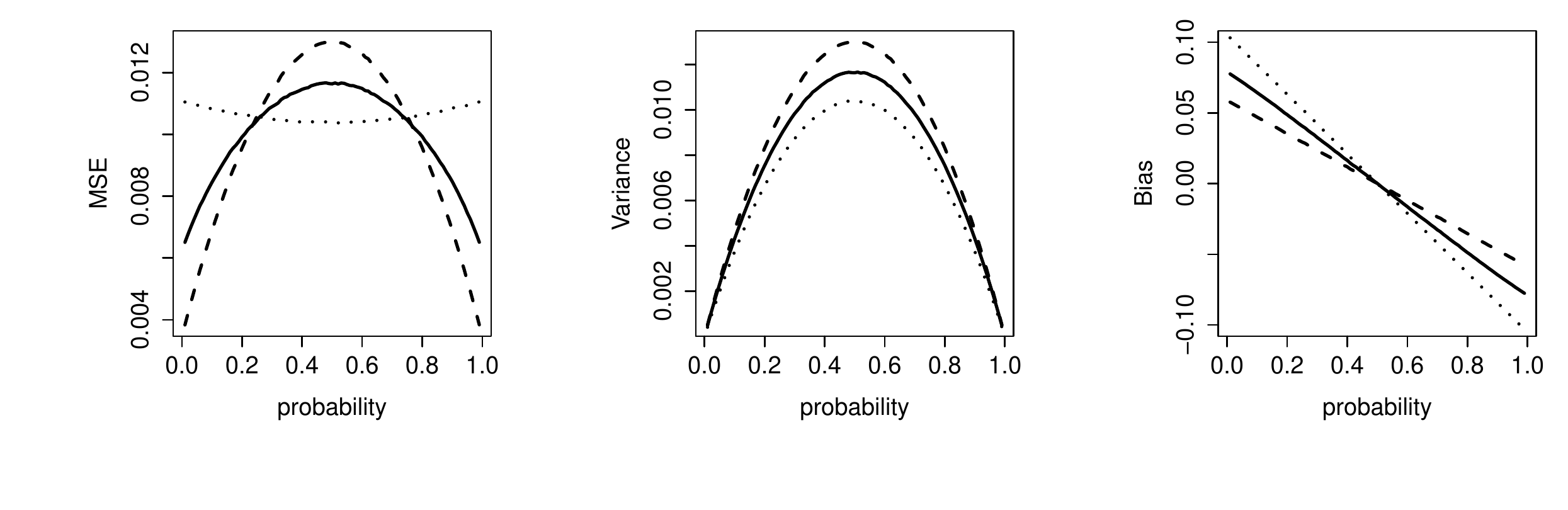}
       \caption{{MSE, variance and bias for the restricted symmetric squared error loss function estimator $\hat{p}_{iq}$ (solid), the squared error loss function estimator $\hat{p}_q$ (dashed) and the Agresti-Coull estimator $\hat{p}_{AC}$ (dotted). Results are based on $n=15$ observations and $10^9$ simulations.} \label{comparisonbeta}}
     \end{figure}
     
 The proposed estimator $\hat{p}_{iq}$ outperforms (in terms of the
     MSE) the Bayes estimator obtained using the squared error loss
     function $\hat{p}_q$ in the interval $\theta \in (0.2,0.8)$. The cost
     of this advantage is the worse performance on the intervals close
     to the bounds as the proposed form of the loss function penalizes
     boundary decisions and by that drives the final estimate away from them. However, the proposed estimator outperforms the
     Agresti-Coull estimator $\hat{p}_{AC}$ at the same intervals $\theta \in
     (0,0.2)$ and $\theta \in (0.8,1)$. Thus, the proposed estimator might
     be considered as a trade-off between currently used estimators
     $\hat{p}_q$ and $\hat{p}_{AC}$, that outperforms $\hat{p}_{AC}$ on
     bounds and $\hat{p}_q$ away from bounds.  
     
 In addition to the MSE,
the associated confidence intervals and coverage probabilities are extensively studied in the literature \citep[see e.g.][]{brown2}. 
In particular, coverage probabilities were shown to have an erratic behaviour
     and often to go below their nominal level. 
Corrections were proposed by \cite{agresti}. 
Confidence intervals can also be constructed around our newly proposed point estimator $\hat{p}_{iq}$.
The following confidence intervals are compared via simulated coverage probabilities in Figure~\ref{fig:coverage}:

\begin{enumerate}[label={[\arabic*]}]
\item  Normal approximation confidence interval centred around $\hat{p}_k$, $k=q,iq,AC$ as suggested by~\cite{brown2}
$$CI_{N}^{(k)} = \hat{p}_k \pm z_{\frac{\alpha}{2}} \sqrt{\frac{\hat{p}_k(1-\hat{p}_k)}{n}},$$
where $1-\alpha$ is  the confidence level.

\item Wilson confidence interval centred around $\hat{p}_{AC}$  \citep{wilson}
$$CI_{W}^{(AC)} = \frac{x+2}{n+4} \pm 2 \frac{\sqrt{n}}{n+2} \sqrt{\frac{x(n-x)}{n^2}+ \frac{1}{n}}.$$

\item  Approximate confidence interval using the delta-method centred around the newly proposed $\hat{p}_{iq}$ 
$$CI^{(iq)}_{D} = \hat{p}_{iq} \pm 2 \sqrt{\hat{V}_{iq}}, \ \hat{V}_{iq} = \left( \frac{ \partial f(x)}{ \partial x} \right)^2 {\Big
  |}_{x=n\hat{p}_{iq}}n\hat{p}_{iq}(1-\hat{p}_{iq}), \ 
f(x)=\hat{p}_{iq}.$$
\end{enumerate}

  \begin{figure}[ht!]
  \centering
       \includegraphics[width=1\textwidth]{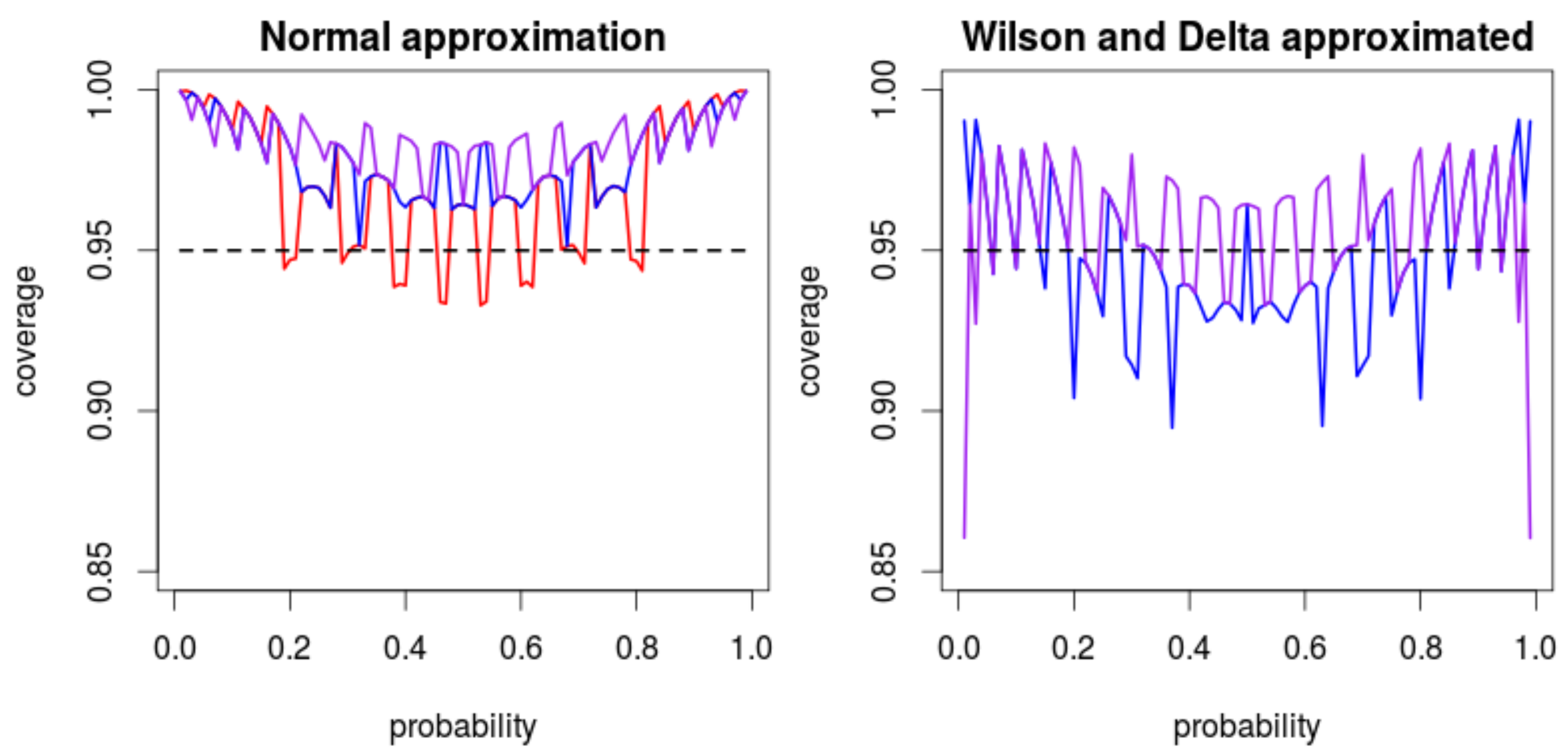}
       \caption{{Left panel: Coverage probabilities of $CI_{N}^{(iq)}$ (blue line), $CI_{N}^{(q)}$ (red line) and  $CI_{N}^{(AC)}$ (purple line) using  [1], the normal approximation interval. Right panel: Coverage probabilities of $CI^{(iq)}_{D}$ (blue line) and $CI_{W}^{(AC)}$ (purple line)  using [2], the Wilson, and [3], the delta-method, confidence intervals respectively. Results are based on $n=15$ observations and $10^9$ simulations.} \label{fig:coverage}}
     \end{figure}
     
 Using the normal approximation confidence interval, the coverage probability of $CI_{N}^{(q)}$ goes below the nominal value for several values of $\theta$. The coverage probabilities of $CI_{N}^{(iq)}$ and    $CI_{N}^{(AC)}$ also fluctuate but do not go below $0.95$ for $N=15$,
     which is a desirable property. While one can find combination of $N$ and $\theta$ for which coverage probabilities $CI_{N}^{(iq)}$ and $CI_{N}^{(AC)}$ might be below $0.95$ \citep{brown2}, it would be generally true that their coverage probabilities are greater than of $CI_{N}^{(q)}$ for larger intervals of $\theta$ and are more robust. In addition, a comparison between the normal approximation method (left panel of Figure~\ref{fig:coverage}) and the Wilson and delta method intervals
(right panel of Figure~\ref{fig:coverage}) supports the suggestion by~\cite{brown2} that the normal approximation method gives a portmanteau way to construct simple confidence intervals
with - on average - better coverage probabilities
than more complicated methods. 

\subsection{Restricted Estimation of a Normal Distribution Mean}

In the following example, it is demonstrated what benefits the proposed form of 
loss function~(\ref{intervalloss}) can provide in a Bayesian framework in the
presence of the additional information that the true parameter lies in
an interval $(a,b)$. 

The problem of restricted mean estimation has been known for a long time and has been extensively studied in the literature~\citep[see e.g.][and references therein]{kumar}. The previously proposed estimators were constructed using the squared error loss function and compared by the Bayesian risk. For an extensive overview of the problem we refer the reader to \cite{restricted} and for some recent generalizations to \cite{marchand}. Interestingly, despite the variety of the literature on the problem and the fact that the Bayes estimator with respect to the uniform prior
 distribution on $(-a,a)$ outperforms uniformly the ``unrestricted'' Bayes estimator under squared error loss function~\citep{hartigan}, the sample mean estimator is still widely used in practice. For this reason, we propose our simple alternative and compare it to the most commonly used estimators. 

Consider the problem of estimating the mean  $\mu$ of a
normal sample of i.i.d. $X_i \sim \mathcal{N}(\mu,\sigma^2)$, $i=1,\ldots,n$
where $\sigma^2$ is known. Assume  it is known that the
parameter $\mu$ belongs to the interval $(-a,+a)$ where $a >0$. A possible example is the estimation of the treatment
effect in paediatric studies of a clinical treatment that was already tested on
adults. An investigator might be sure that the same dosage of the drug as given to adults would cause a greater level of toxicity in children. 
It is of interest how much one can
gain by incorporating this information in the estimator~(\ref{intervalestimator}) compared to currently used approaches that
use the squared error loss. 

As stated above, the common way to incorporate this information in a Bayesian framework
is restricting the prior distribution for the parameter of interest $\mu$ to the
interval $(-a,+a)$ and then using the squared error loss to obtain
a point estimate (as a summary of a posterior distribution). However,
the information about the restricted space  is used only on
the prior and ignored when choosing summary statistics,
 while the proposed form of loss function~(\ref{intervalloss})
allows to incorporate it again. Moreover, in practice the prior information is often ignored and the sample mean estimator (corresponding to Jeffrey's prior) is used. Therefore, a comparison of the proposed loss function and
the currently utilized approaches (with and without the incorporation
the prior information) is of interest. 

Consider samples of size  $n=15$ and alternative Bayes estimators as follows:

\begin{itemize}
\item Bayes estimator under Jeffrey's prior  $g(\mu) \propto k \sqrt{\frac{n}{\sigma^2}}$ for $\mu$ and the squared error loss function. Denoted by $J$;

\item Bayes estimator under the uniform prior $\mathbb{U} \sim (-a,+a)$ for $\mu$ 
and squared error loss function. Denoted by $U_1$.

\item Bayes estimator~(\ref{intervalestimator}) under the uniform prior $\mathbb{U}(-a,+a)$ for $\mu$ and the interval symmetric squared error loss function. Denoted by $U_2$.
\end{itemize}

Alternatively to $U_2$, the Bayes estimator $U_2^{\prime}$ defined by~(\ref{intervalestimator}), but with a wider interval $(-1.25a,+1.25a)$ is also applied to investigate how a lower penalization of the bounds influences the estimation. This wider interval could be considered as a conservative way to incorporate information about the location of the parameter. 

Two cases $a=2$ and $a=4$ are considered. The parameter $\sigma^2=4$ is assumed to be known in both cases. As before, the $MSE$ is used for a fair comparison of methods.  The results for $10^6$ replications for each value of $\mu$ are given in Figure \ref{fig:normal}. 

  \begin{figure}[ht!]
  \centering
       \includegraphics[width=1\textwidth]{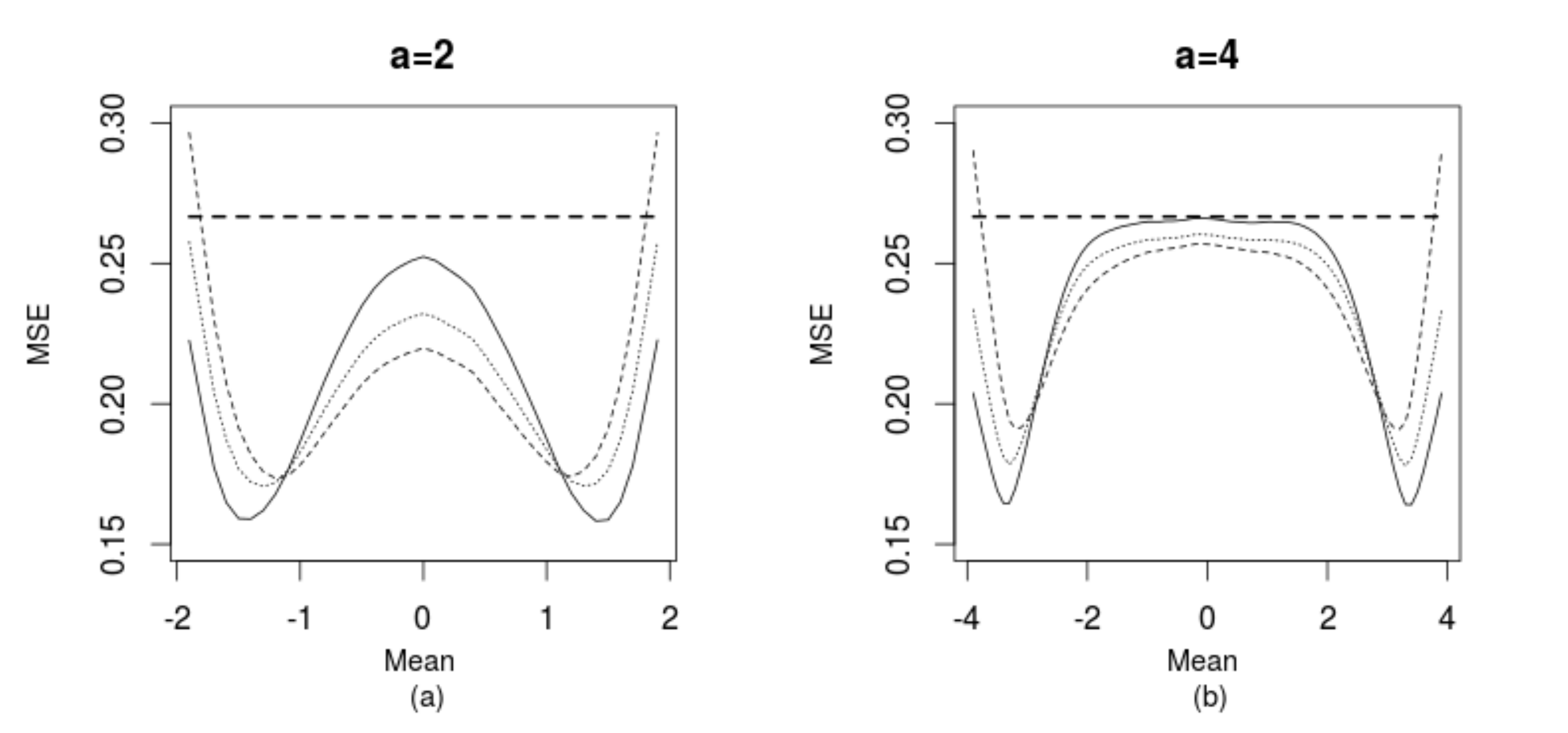}
       \caption{MSE corresponding to different values of the restricted mean parameter $\mu$ with (a) $a=2$ and (b) $a=4$ and  the Bayes estimator $U_1$ (solid), $U_2$ (dashed) and $U_2^{\prime}$ (dotted) and simple mean estimator  $J$ (solid dashed). Results are based on $10^6$ replications. \label{fig:normal}}
     \end{figure}
     
Incorporating the interval information in the Bayes estimator allows for improvement if the true value of the parameter $\mu$ does not lie close to the bound. In the case $a=2$ estimator $U_2$ outperforms $U_1$ on the interval $\mu \in (-1,+1)$ and in the case $a=4$ on the interval $\mu \in (-3,+3)$. The same holds for estimator $U_2^{\prime}$, however, its MSE never falls below the level of the MSE of estimator $J$. Clearly, the wider interval improves estimation on the bounds, at the cost of the higher MSE in the middle. Note that the wider interval $a=4$ corresponds to weaker additional information and to a smaller benefit in the MSE comparing $U_1, U_2$ and~$U_2^{\prime}$ against $J$. Overall, the Bayes estimator corresponding to the interval symmetric loss function avoids the boundary decisions, improves the estimation if the parameters lies away from bounds and can be recommended for the application if boundary decisions lead to severe consequences.

\subsection{Bayesian Estimation of the Parameters of a Gamma Distribution}
An important example of  multidimensional restricted parameter estimation is a Gamma distribution with positive shape and scale parameters $\alpha_1,\alpha_2 >0$. Bayesian inference for this problem has been studied, for example, in \cite{miller} and methods for approximate computation of Bayes estimators were proposed using the Lindley approximation~\citep{lind}. 

We consider the function $L_{sq}^{(2)}$ given in (\ref{multiloss2}) to obtain Bayes estimators. As the loss function (\ref{multiloss2}) is the sum of the univariate precautionary loss functions~(\ref{lossscale}), the following estimators are  used $
\hat{\alpha}_i= \sqrt{\mathbb{E}(\alpha_i^2)}, \ i=1,2
$
where the expectation $\mathbb{E}$ is taken with respect to the posterior distribution. 

Let us consider an experiment with sample size $n=15$. The parameters of the Gamma distribution are varied over a grid, $\alpha_1,\alpha_2 \in (0,10) $ and the performance of the different approaches are compared by simulations. The two approaches compared use the same prior distribution for parameters $\alpha_1$ and $\alpha_2$ using the following estimators:
 \begin{enumerate}[label={\arabic*)}]

\item Bayes estimator under the squared error loss function,

\item Bayes estimator under the multivariate precautionary loss function.
\end{enumerate}

{We use the same weakly informative Gamma prior distributions $\Gamma(10^{-4},10^{-4}$) of ($\alpha_1, \alpha_2)$ with parameters for both estimation methods} and an approximate method proposed by \cite{lind}. {The weakly informative distribution are chosen to minimise the influence of the prior distribution on the comparison of estimators and corresponds to the situations of no prior knowledge about the parameters.} We would like to emphasize that the choice of the prior distribution is out of the scope of this paper and the goal is to compare two Bayes estimators when all other parameters are equal. The difference between the MSE of estimators based on method (1 against method 2) for parameters $\alpha_1$ and $\alpha_2$ in $10^9$ simulations is given in Figure~\ref{fig:gamma_example}.

     \begin{figure}[ht!]
  \centering
       \includegraphics[width=1\textwidth]{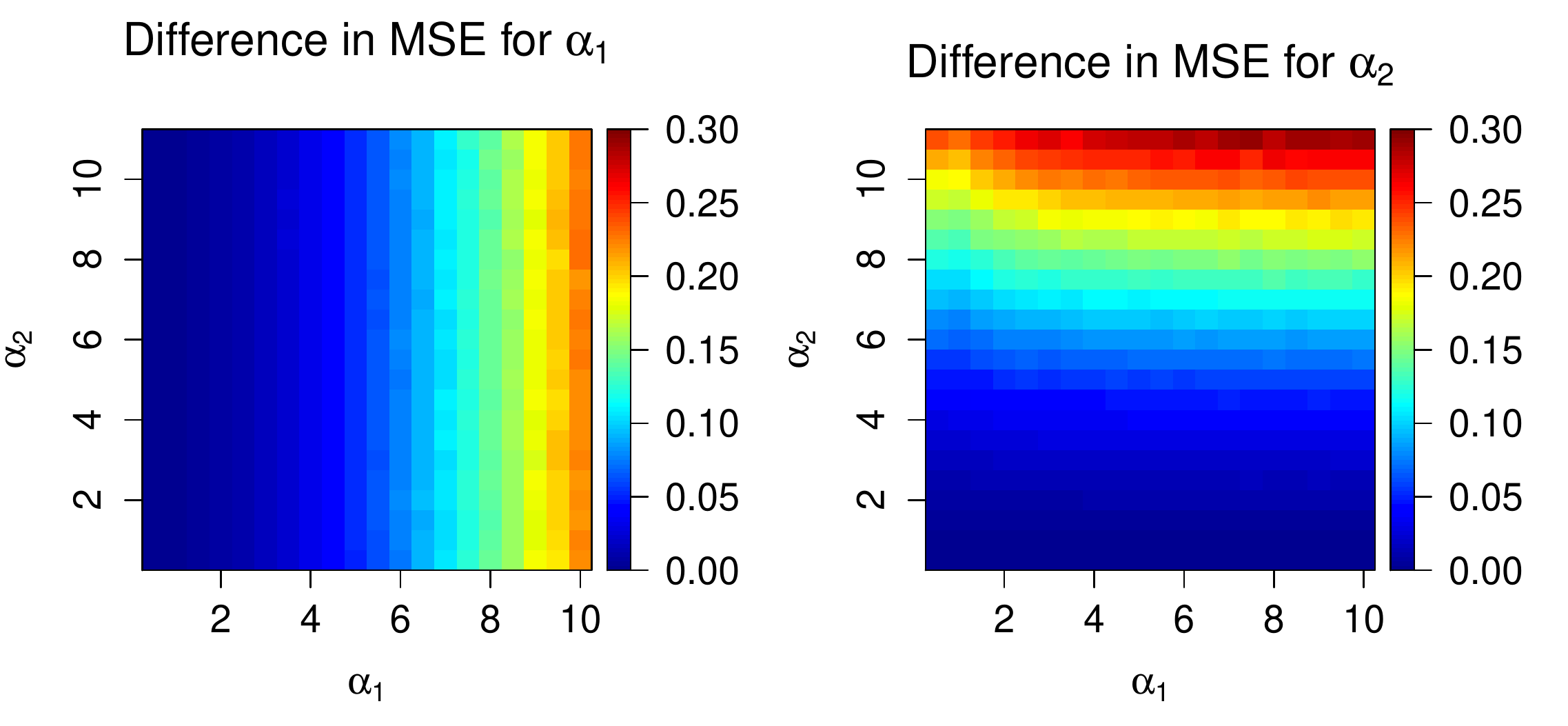}
       \caption{Difference in the MSEs for parameters $\alpha_1$ and $\alpha_2$ for their different true values and using Bayes estimator under the squared error loss function and Bayes estimator under $L_{sq}^{(2)}$. Results are based on $10^9$ replications.  \label{fig:gamma_example}}
     \end{figure}

The differences in the MSEs for both parameters are positive for all values of  the true parameters $\alpha_1$ and $\alpha_2$. It means that the Bayes estimator from method 2) is associated with smaller MSE than the Bayes estimator from method 1). The difference in the MSE increases as the true value of the parameter increases. This result makes the proposed estimator and the associated loss function $L_{sq}^{(2)}$ good candidates for further investigation in multidimensional estimation problems.

\subsection{{Bayesian Estimation of the Parameters of a Weibull Distribution}}
Another important example of multidimensional restricted parameters estimation is the Weibull distribution which positive scale and shape parameters $\lambda, \nu >0$. The Weibull distribution is of great importance in applications as it is widely employed in, for example, reliability engineering, extreme value theory and survival analysis~\citep{nwobi2014comparison}.  Bayesian inference for this problem has been studied in \cite{gupta2017classical}. Importantly, despite both parameters being defined on the positive real line only, the squared error loss function (and associated posterior mean Bayes estimator) are used in these works. Below, we consider how the novel loss function~(\ref{scaleinvariantloss}) and the associated Bayes estimator behaves in this estimation problem.

We consider the function $L_k^{(m)}$ given in~(\ref{multiloss}) to obtain Bayes estimators. As the loss function~(\ref{multiloss}) is the sum of the univariate loss functions~(\ref{mauroloss}), the following Bayesian estimators are used for both scale parameters of Weibull distribution
\begin{equation}
\hat{\lambda}_k=\left(\frac{\mathbb{E}(\lambda^k)}{\mathbb{E}(\lambda^{-k})}\right)^{1/2k}, \ \ \hat{\nu}_k=\left(\frac{\mathbb{E}(\nu^k)}{\mathbb{E}(\nu^{-k})}\right)^{1/2k}
\label{weibull}
\end{equation}
where the expectations are taken with respect to the posterior density function. Note that the estimators depend on the parameter of the loss function $k$. We will investigate the influence of the parameter on the estimation.

As above, we consider an experiments with a small sample size, $n=15$. We consider several values of both scale and shape parameters, $\lambda \in \{1,2,3,4,5\}$, $\nu \in \{(0.5,1,5,10,15)\}$, and the performance of different approaches are compared by simulations. Again, the two approaches compared use the same weakly information Gamma prior distribution $\Gamma(10^{-4},10^{-4})$ for both positive parameters, as we would like to minimise the impact of the prior distribution on the comparison.

We start from the comparison of 
 \begin{enumerate}[label={\arabic*)}]

\item Bayes estimators under the squared error loss function,

\item Bayes estimators~(\ref{weibull}) under the multivariate $L_1^{(m)}$~(\ref{multiloss}) for $k=1$.
\end{enumerate}
The difference between the MSE of estimators (method 1 against method 2) for positive parameters $\lambda$, $\nu$ in $10^4$ simulations are given in Table~\ref{tab:weibull}. The MSE for $\nu$ are scaled by $\frac{1}{\nu^\lambda}$ to obtain the results on a similar scale for various parameters.

\begin{table}[ht!]
\centering
       \caption{Difference in the MSEs for parameters $\lambda$ (upper lines) and $\nu$ (lower lines) for their different true values and using Bayes estimator under the squared error loss function and Bayes estimator under $L_1^{(m)}$. Results are based on $10^4$ replications.  \label{tab:weibull}}
\begin{tabular}{cccccc}
  \hline
 &  $\nu=1$ & $\nu=2$ &  $\nu=5$ & $\nu=10$ & $\nu=15$ \\ 
  \hline
\multirow{2}{*}{$\lambda=1$} &  0.0061 & 0.0065 & 0.0067 & 0.0069 & 0.0068 \\ 
 &  0.0097 & 0.0081 & 0.0033 & 0.0067 & 0.0087 \\
    \hline

\multirow{2}{*}{$\lambda=2$} &  0.0268 & 0.0270 &  0.0274 & 0.0265 & 0.0269 \\ 
&  0.0090  & 0.0106 & 0.0103 & 0.0142 &  0.0142 \\
   \hline

\multirow{2}{*}{$\lambda=3$} &  0.0608 &  0.0616 &  0.0598 & 0.0622 & 0.0613\\ 
 &  0.0081 & 0.0125  &  0.0143  & 0.0136 &  0.0147 \\
    \hline

\multirow{2}{*}{$\lambda=4$} &   0.1075 & 0.1068 & 0.1065 & 0.1065 &  0.0987\\ 
 &    0.0104 & 0.0122  &  0.0146  &  0.0136  &  0.0151 \\
    \hline

\multirow{2}{*}{$\lambda=5$} & 0.1652 & 0.1649 & 0.1645 & 0.1652 & 0.1585 \\ 
&  0.0093 &   0.0131 &   0.0140  &   0.0115   &  0.0152 \\
   \hline
\end{tabular}
\end{table}

The differences in the MSEs for both parameters are positive for all considered true values of $\lambda$ and $\nu$. It follows that the Bayes estimator from method 2) leads to a smaller MSE than the estimator corresponding to the squared error loss function. For a fixed value of $\nu$, the MSE corresponding to $\lambda$ increases with the parameters. The scaled MSE corresponding to $\nu$ stays nearly the same for various values of $\lambda$. Overall, the proposed estimator and the associated loss function can be good candidates to be used for the parameters defined on the positive real line.

So far, only the loss function  $L_k^{(m)}$ for $k=1$ has been considered. To investigate the impact of the parameter $k$ on the estimation characteristics, we fix the scale parameter $\nu=1$ and vary the shape parameter over the grid $\lambda \in (1,8)$. Specifically, we consider the following value of the parameter $k=\{1,2,3,4\}$.  The results are given in Table~\ref{tab:kdependence}.

The posterior mean estimator (corresponding to the squared error loss function) corresponds to the highest MSE for all considered values of $\lambda$. As it was shown above, one can reduce the MSE by applying the estimator~(\ref{weibull}) for $k=1$. Considering the bias $k=1$, the estimator overestimates the true value of the parameter. When increasing the value of $k$, the value of the estimator decreases and approaches the true value of the parameter. This results in reduce bias of the estimator for all values of $\lambda$ as $k$ increases. Furthermore, the variance of the estimator decreases with parameter $k$. This results in decrease of the MSE as $k$ increases. Overall, increasing values of $k$ are found to lead lower estimates, which in the case of the overestimations leads to smaller MSE and more accurate estimation.

\begin{table}[ht!]
\centering
       \caption{MSE, Bias and Variance for the Bayes estimator of $\lambda\in (1,8)$ corresponding to the squared error loss function (posterior mean) and for the estimator $\hat{\lambda}_k$ given in Equation~(\ref{weibull}) for $k=1,2,3,4$. Results are based on $10^4$ replications.  \label{tab:kdependence}}
\begin{tabular}{cccccccccccc}
  \hline
 &  & $\lambda=1$ & $\lambda=2$ &  $\lambda=3$ & $\lambda=4$ & $\lambda=5$ & $\lambda=6$ & $\lambda=7$& $\lambda=8$ \\ 
 
  \hline
  \multirow{1}{*}{Posterior} & MSE & 0.07 & 0.28 & 0.65 & 1.15 & 1.78 &  2.58 & 3.51 &  4.57 \\
Mean & Bias & 0.09 &0.18 &0.28 &0.38 & 0.47 & 0.57 & 0.66&  0.77\\
 &Var  & 0.06 & 0.25 & 0.57 & 1.00 & 1.55 & 2.24&  3.06 & 3.97\\
   \hline

 \multirow{1}{*}{Est.~(\ref{weibull})} & MSE & 0.06 & 0.25 & 0.59 & 1.04 & 1.61 & 2.33 & 3.18&  4.13  \\
$k=1$  & Bias & 0.07 & 0.14 & 0.21 & 0.28 & 0.36 &  0.43&  0.50 & 0.58\\
 &Var  & 0.06 & 0.23 & 0.54 & 0.95 & 1.48 & 2.15&  2.93 & 3.80 \\
   \hline

 \multirow{1}{*}{Est.~(\ref{weibull})} & MSE & 0.06 & 0.25 & 0.58 & 1.03 & 1.60 & 2.32 & 3.16 & 4.11\\
$k=2$ & Bias & 0.07 & 0.13 & 0.21 &  0.21 & 0.34&  0.42&  0.49 & 0.56\\
 &Var  & 0.06 & 0.23 & 0.54 & 0.95 &  1.48 & 2.14 & 2.92 & 3.78\\
   \hline

   \multirow{1}{*}{Est.~(\ref{weibull})} & MSE & 0.06 & 0.25 & 0.58 & 1.06 & 1.58 & 2.32 & 3.15 &  4.02 \\
$k=3$ & Bias & 0.06 &0.13 & 0.20 & 0.27& 0.33 & 0.40 & 0.44 & 0.54\\
 &Var  & 0.05 &  0.23 & 0.54 & 0.95 & 1.45&  2.14 & 2.90 & 3.77\\
   \hline

     \multirow{1}{*}{Est.~(\ref{weibull})} & MSE & 0.06 &  0.25 & 0.57 & 1.02 & 1.56&  2.27 & 3.09 &  4.01\\
$k=4$ & Bias & 0.06 & 0.12 & 0.19&  0.25 & 0.31 & 0.38 & 0.44&  0.52\\
 &Var & 0.05 & 0.23 & 0.53 & 0.94&  1.46&  2.12 & 2.89 & 3.74\\
   \hline

\end{tabular}
\end{table}

\section{{Application of the novel estimator to the paediatric clinical trial sample size calculation}}

\subsection{Motivation}

There is overwhelming medical evidence  that children's response can noticeably differ from the adults' response in many diseases~\citep{stephenson2005children,klassen2008children}. While there is generally a large amount of data available from adult studies, the knowledge about children remains quite limited. An example is drug-resistant partial epilepsy. Despite guidelines establishing the need to perform comprehensive paediatric drug development programmes, pivotal trials in children with epilepsy have been completed mostly in Phase IV as a postapproval replication of adult data~\citep{rheims2008greater}. To change this practice, more studies in children should be conducted. The planning of such studies, however, requires many assumptions, and the information from adults population can (and should) be efficiently used to justify them. Specifically, the planning requires the values of the expected responses (given previous trials) for an alternative treatment (or control) to which the comparison is to be made. The underestimation of this expected value can lead to a underpowered study and to unethical allocation of children in the study. In this section, we will demonstrate how the information about the relation between adults' and children's response can be incorporated in the proposed interval symmetric estimator and how it impacts the subsequent planning of the clinical trial.

\subsection{Setting}
Assume that one would like to conduct a randomised controlled clinical trial to study whether a novel intervention leads to a significantly different response in children with inadequately controlled partial seizures. A typical question that a clinician would ask a statistician before the trial is the sample size required to achieve a desirable level of statistical power. Suppose that equal sample sizes for the intervention and control (placebo) groups are to be used. Formally, the clinician would like to test the following hypothesis
$$H_0:p \leq p_{placebo} \ {\rm versus} \ H_1:p > p_{placebo}$$
where $p$ is the known probability of response  for the tested intervention and  $p_{placebo}$ is the probability of response given the placebo. To achieve $1-\beta$ in testing this hypothesis and given the type-I error $\alpha$, one can obtain the following formula for the sample size
$$n \approx \frac{\left( z_{\alpha} \sqrt{2\bar{p}(1-\bar{p})} + z_{\beta} \sqrt{p_{target}(1-p_{target}) + p_{placebo}(1-p_{placebo})}\right)^2}{\left(p_{target} - p_{placebo}\right)^2} $$
where $p_{target}$ is the clinically important response which the clinician would like to find in the trial, and $z_\alpha$ and $z_\beta$ are $1-\alpha$ and $1-\beta$ quantiles of the normal distribution. The latter two values are to be defined by clinicians and statisticians. The clinically important response, given the severity of the diseases and alternative treatments~\citep{rheims2008greater}, is assumed to be $p_{target}=0.5$. However, the response corresponding to the placebo effect is much more challenging.

Given a vast amount of data from clinical trials in adults, it is known that the response to placebo is $0.10$~\citep{rheims2008greater}. However, one cannot use this value as there is an evidence that the placebo response can be lower for children. Moreover, the meta-analysis by~\citep{rheims2008greater} suggested the placebo response is at $0.20$ twice as large. Using this knowledge and the clinical data from the recent study~\citep{glauser2006double}, one can estimate the response for the placebo effect. We argue that the estimator corresponding to the interval symmetric loss function~(\ref{intervalloss}) is appropriate choice for at least two reasons:
\begin{enumerate}
\item It is known that the clinically feasible values of the placebo response start at $0.1$. Therefore, the probability of interest, $p_{placebo}$ lies in the interval $(0.1,1)$.
\item Underestimation of the placebo response is highly undesirable given that the trial is conducted in children. The underestimation leads to underpowered study, which might result in its failure and in the unethical ``waste'' of children patients involved.
\end{enumerate}
At the same time, one would like to limit a number of children enrolled in the study and the proposed sample size should be justified. The difference in the sample size calculations using the currently used ``naive'' estimator for the placebo and the proposed interval symmetric estimator~(\ref{intervalestimator}) is given below.

\subsection{Results}
Given the data obtained in the randomised clinical trial~\citep{glauser2006double}, there were $97$ children patients assigned to the placebo group and $19$ of them experienced a reduction of partial seizure frequency. Therefore, the ``naive'' estimator that would be typically used to plan the clinical trial is
$$\hat{p}_{naive}=\frac{19}{97}\approx 0.196.$$

Alternatively, using the information that the estimator $p\in(0.1,1)$ and assuming the uniform prior distribution for the probability and the Beta posterior $\mathcal{B}(x+1,n-x+1)$ as in Section~\ref{sec:probability}, one can obtain the following formula for the interval symmetric estimator~(\ref{intervalestimator})
$$\hat{p}_{iq}=\frac{0.1- \frac{(x+1)(x+2)}{(n+3)(n+2)} + \sqrt{\left(\frac{(x+1)(x+2)}{(n+3)(n+2)}-0.1 \right)^2 - \left(1.1-2\frac{x+1}{n+2} \right) \left(2.2\frac{x+1}{n+2} - 1.1 \frac{(x+1)(x+2)}{(n+3)(n+2)} \right) }}{1.1-2\frac{x+1}{n+2}}.$$
Plugging-in the data from the study~\citep{glauser2006double}, $x=19$ and $n=97$, one can obtain 
$$\hat{p}_{iq}\approx 0.209.$$
While the difference in the estimates can look quite marginal, it indeed leads to a difference in the required sample size per treatment group. Using $\alpha=0.05$ type-I error, and the desirable power of $1-\beta=0.90$, one can obtain that the estimators lead to the following sample sizes: $n_{naive}= 41$ and $n_{iq}=45$. Consequently, the proposed estimator suggests to enrol 8 more patients into the study. While this might seem to result in a minor change in the total sample size, this justified increase in the sample size can avoid a failure of the study and might lead to a new, better intervention available for children suffering from epilepsy.

\section{Discussion}
The concept of a symmetric loss function in a restricted parameter space is introduced in this paper. Scale symmetric and interval symmetric loss functions which share desirable properties are provided. On the basis of {four examples}, we show that the corresponding Bayes estimators perform well when compared to other available estimators based on squared error loss, and improve the estimation if the parameter lies away from bounds. {Following the real life application example, it is found that the novel Bayes estimators allow avoiding boundary decisions that can be undesirable in paediatric clinical trials.} Consequently, the estimator can be recommended in other applications where more conservative estimates are preferable. 

{Overall, when choosing a loss function for the particular application, we argue that a statistician should answer two questions: (i) is there credible information that the parameter of interest is restricted to particular space and (ii) are there any values of the parameters should be avoided as they might lead to undesirable consequences. Answering these questions will guide whether, for example, the squared error loss function, the scale symmetric loss function or the interval symmetric (with specified interval) loss function should be used.} We would like to emphasize that we restrict our choice to some specific loss functions, mainly due to their simplicity and easy implementation. Alternative loss function sharing the stated properties can and should be considered. 

The proposed definitions were generalized for a subset of $\mathbb{R}^m$, where distances on restricted spaces could themselves be used as loss functions, which usually are non-convex and do not result in explicit minimizers. While we have presented the modification of the squared error loss function for  a restricted univariate parameter defined on an interval, the equivalent multidimensional extension seems to be non-trivial and requires further study. 

\section*{Acknowledgements}
The authors acknowledge the insightful and constructive comments made by associate editor and two reviewers. These comments have greatly helped to sharpen the original submission.

\section*{Funding}
This project has received funding from the European Union’s
Horizon 2020 research and innovation programme under the 
Marie Sklodowska-Curie grant agreement No 633567 and by the Medical Research Council (MR/M005755/1) and, in part, from Prof Jaki's Senior Research Fellowship (NIHR-SRF-2015-08-001) 
supported by the National Institute for Health Research. The views expressed in this publication are those of the authors and not necessarily those of the NHS, the National Institute for Health Research or the Department of Health and Social Care (DHCS)..

\bibliographystyle{tfs}
\bibliography{thesis}

\begin{thebibliography}{10}
\providecommand{\MR}{\relax\unskip\space MR }
\providecommand{\url}[1]{\normalfont{#1}}
\providecommand{\urlprefix}{Available at }

\bibitem{agresti}
A. Agresti and B.A. Coull, \emph{Approximate is better than “exact” for
  interval estimation of binomial proportions}, The American Stat. 52 (1998),
  pp. 119--126.

\bibitem{ait}
J. Aitchison, \emph{On criteria for measures of compositional difference},
  Mathematical Geology 24 (1992), pp. 365--379.

\bibitem{berger2006case}
J. Berger, \emph{et~al.}, \emph{The case for objective bayesian analysis},
  Bayesian analysis 1 (2006), pp. 385--402.

\bibitem{brown}
L. Brown, \emph{Inadmissibility of the usual estimators of scale parameters in
  problems with unknown location and scale parameters}, The Annals of Math.
  Stat. 39 (1968), pp. 29--48.

\bibitem{brown2}
L.D. Brown, T.T. Cai, and A. DasGupta, \emph{Interval estimation for a binomial
  proportion}, Stat. Sc. 16 (2001), pp. 101--117.

\bibitem{efron2015frequentist}
B. Efron, \emph{Frequentist accuracy of bayesian estimates}, JRSS: B 77 (2015),
  pp. 617--646.

\bibitem{galilei}
G. Galilei, \emph{Lettera (intorno la stima di un cavallo).}, Le opere di
  Galileo Galilei  (1627), p. Prima edizione completa. Societa editrice
  fiorentina. Firenze.

\bibitem{glauser2006double}
T. Glauser, R. Ayala, R. Elterman, W. Mitchell, C. Van~Orman, L. Gauer, Z. Lu,
  N.S. Group, \emph{et~al.}, \emph{Double-blind placebo-controlled trial of
  adjunctive levetiracetam in pediatric partial seizures}, Neurology 66 (2006),
  pp. 1654--1660.

\bibitem{gupta2017classical}
P.K. Gupta and A.K. Singh, \emph{Classical and bayesian estimation of {W}eibull
  distribution in presence of outliers}, Cogent Mathematics 4 (2017), p.
  1300975.

\bibitem{hartigan}
J.A. Hartigan, \emph{Uniform priors on convex sets improve risk}, Stat. \&
  Prob. Lett. 67 (2004), pp. 285--288.

\bibitem{stein1961}
W. James and C. Stein, \emph{Estimation with quadratic loss}, in \emph{Pr. of
  the Fourth Berkeley Symposium on Math. Stat. and Prob.}, Vol.~1. 1961, pp.
  361--379.

\bibitem{karimnezhad2016bayes}
A. Karimnezhad and F. Moradi, \emph{Bayes, e-bayes and robust bayes prediction
  of a future observation under precautionary prediction loss functions with
  applications}, Applied Math. Model. 40 (2016), pp. 7051--7061.

\bibitem{karimnezhad2014bayes}
A. Karimnezhad, S. Niazi, and A. Parsian, \emph{Bayes and robust bayes
  prediction with an application to a rainfall prediction problem}, J. of the
  Korean Stat. Soc. 43 (2014), pp. 275--291.

\bibitem{prediction}
A. Kiapour and N. Nematollahi, \emph{Robust bayesian prediction and estimation
  under a squared log error loss function}, Stat.\& Prob. Let. 81 (2011), pp.
  1717--1724.

\bibitem{klassen2008children}
T.P. Klassen, L. Hartling, J.C. Craig, and M. Offringa, \emph{Children are not
  just small adults: the urgent need for high-quality trial evidence in
  children}, PLoS medicine 5 (2008), p. e172.

\bibitem{kubokawa}
T. Kubokawa, \emph{A unified approach to improving equivariant estimators}, The
  Ann. of Stat. 22 (1994), pp. 290--299.

\bibitem{kumar}
S. Kumar and Y.M. Tripathi, \emph{Estimating a restricted normal mean}, Metrika
  68 (2008), pp. 271--288.

\bibitem{lind}
D.V. Lindley, \emph{Approximate bayesian methods}, Trabajos de estad{\'\i}stica
  y de investigaci{\'o}n operativa 31 (1980), pp. 223--245.

\bibitem{admissible}
E. Mahmoudi and H. Zakerzadeh, \emph{An admissible estimator of a lower-bounded
  scale parameter under squared-log error loss function}, Kybernetika 47
  (2011), pp. 595--611.

\bibitem{marchand}
{\'E}. Marchand, A. Najafabadi, and T. Payandeh, \emph{Bayesian improvements of
  a mre estimator of a bounded location parameter}, Elect. J. of Stat. 5
  (2011), pp. 1495--1502.

\bibitem{restricted}
E. Marchand and W.E. Strawderman, \emph{Estimation in restricted parameter
  spaces: A review}, in \emph{A Festschrift for Herman Rubin}, Inst. of Math.
  Stat.,  2004, pp. 21--44.

\bibitem{logisticnorm2}
G. Mateu-Figueras, V. Pawlowsky-Glahn, and J.J. Egozcue, \emph{The normal
  distribution in some constrained sample spaces}, SORT 37 (2013), pp. 29--56.

\bibitem{miller}
R.B. Miller, \emph{Bayesian analysis of the two-parameter gamma distribution},
  Technometrics 22 (1980), pp. 65--69.

\bibitem{mozgunov2018crm}
P. Mozgunov and T. Jaki, \emph{Improving a safety of the {CRM} via a modified
  allocation rule}, Preprint, arXiv:1807.05781  (2018).

\bibitem{norstrom1996}
J.G. Norstrom, \emph{The use of precautionary loss functions in risk analysis},
  IEEE Transactions on Reliability 45 (1996), pp. 400--403.

\bibitem{nwobi2014comparison}
F.N. Nwobi and C.A. Ugomma, \emph{A comparison of methods for the estimation of
  {W}eibull distribution parameters}, Metodoloski zvezki 11 (2014), p.~65.

\bibitem{rheims2008greater}
S. Rheims, M. Cucherat, A. Arzimanoglou, and P. Ryvlin, \emph{Greater response
  to placebo in children than in adults: a systematic review and meta-analysis
  in drug-resistant partial epilepsy}, PLoS medicine 5 (2008), p. e166.

\bibitem{sainthilary2018}
G. Saint-Hilary, V. Robert, T. Jaki, M. Gasparini, and P. Mozgunov, \emph{A
  novel measure of drug benefit-risk assessment based on scale loss score
  (slos)}, Stat. Methods in Medical Research, doi.org/10.1177/0962280218786526
  (2018).

\bibitem{scardovi}
I. Sc{\`a}rdovi, \emph{Appunti di statistica}, Vol.~1, P{\`a}tron, Bologna,
  1980.

\bibitem{stein}
C. Stein, \emph{Inadmissibility of the usual estimator for the variance of a
  normal distribution with unknown mean}, Ann. of the Inst. of Stat. Math. 16
  (1964), pp. 155--160.

\bibitem{stephenson2005children}
T. Stephenson, \emph{How children's responses to drugs differ from adults},
  British journal of clinical pharmacology 59 (2005), pp. 670--673.

\bibitem{wilson}
E.B. Wilson, \emph{Probable inference, the law of succession, and statistical
  inference}, ‎J. Am. Stat. Assoc 22 (1927), pp. 209--212.

\end{thebibliography}

\clearpage

\section*{Supplementary Materials}

\section*{Examples: Moderate and Large Sample Sizes}
While the cases of small sample sizes were used in the main body of the manuscript only $(n=15)$, it is also on interest to investigate how different the corresponding results for moderate and large sample sizes. Below, we revisit four examples considered in the work using moderate $(n=100)$ and large ($n=1000)$ sample sizes. Overall, all stated conclusions about the relative performances of the proposed estimators compared to the standard one stand. While the quantitatively an increased sample sizes leads to smaller differences (due to the improved estimation for all methods), the qualitative patterns are the same.

\subsection*{Estimation of a Probability}
Figure~\ref{probability100} presents the MSE, variance and bias for the proposed estimator of a probability, for the Agresti-Coull estimator, and for the squared error loss function estimator using the total sample size $n=100$.

\begin{figure}[!h]
  \centering
      \includegraphics[width=1\textwidth]{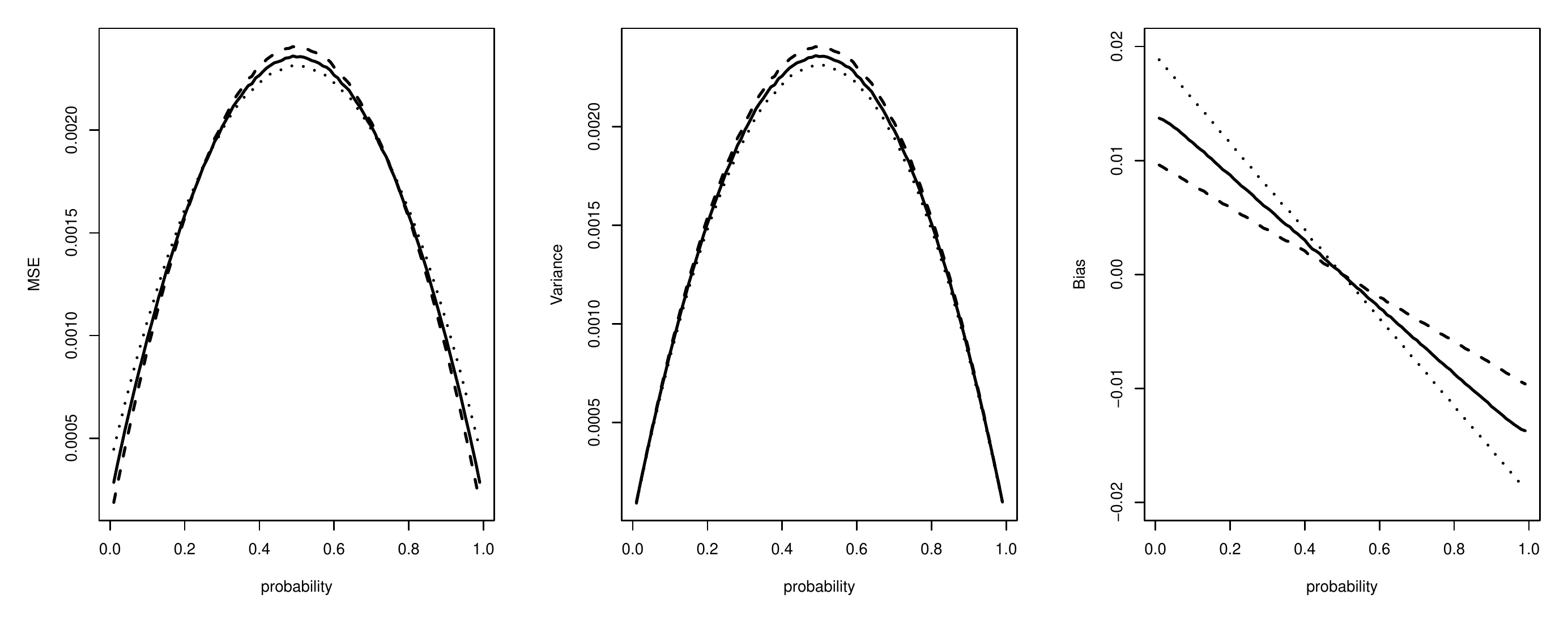}
       \caption{{MSE, variance and bias for the restricted symmetric squared error loss function estimator $\hat{p}_{iq}$ (solid), the squared error loss function estimator $\hat{p}_q$ (dashed) and the Agresti-Coull estimator $\hat{p}_{AC}$ (dotted). Results are based on $n=100$ observations and $10^4$ simulations.} \label{probability100}}
     \end{figure}
     
As expected, the magnitude of differences is now smaller for all considered characteristics. Specifically, the MSE associating with $\hat{p}_q$ is much closer to those of $\hat{p}_{AC}$ and $\hat{p}_{iq}$. Nevertheless, the proposed estimator $\hat{p}_{iq}$, again, performs better (in terms of the MSE)  than the Bayes estimator obtained using the squared error loss
     function $\hat{p}_q$ in the interval $\theta \in (0.2,0.8)$, and than Agresti-Coull estimator $\hat{p}_{AC}$ in the interval $\theta \in (0.8,1)$. 
     
Figure~\ref{probability1000} presents the MSE, variance and bias for the proposed estimator of a probability, for the Agresti-Coull estimator, and for the squared error loss function estimator using the total sample size $n=1000$.
  
\begin{figure}[!h]
  \centering
      \includegraphics[width=1\textwidth]{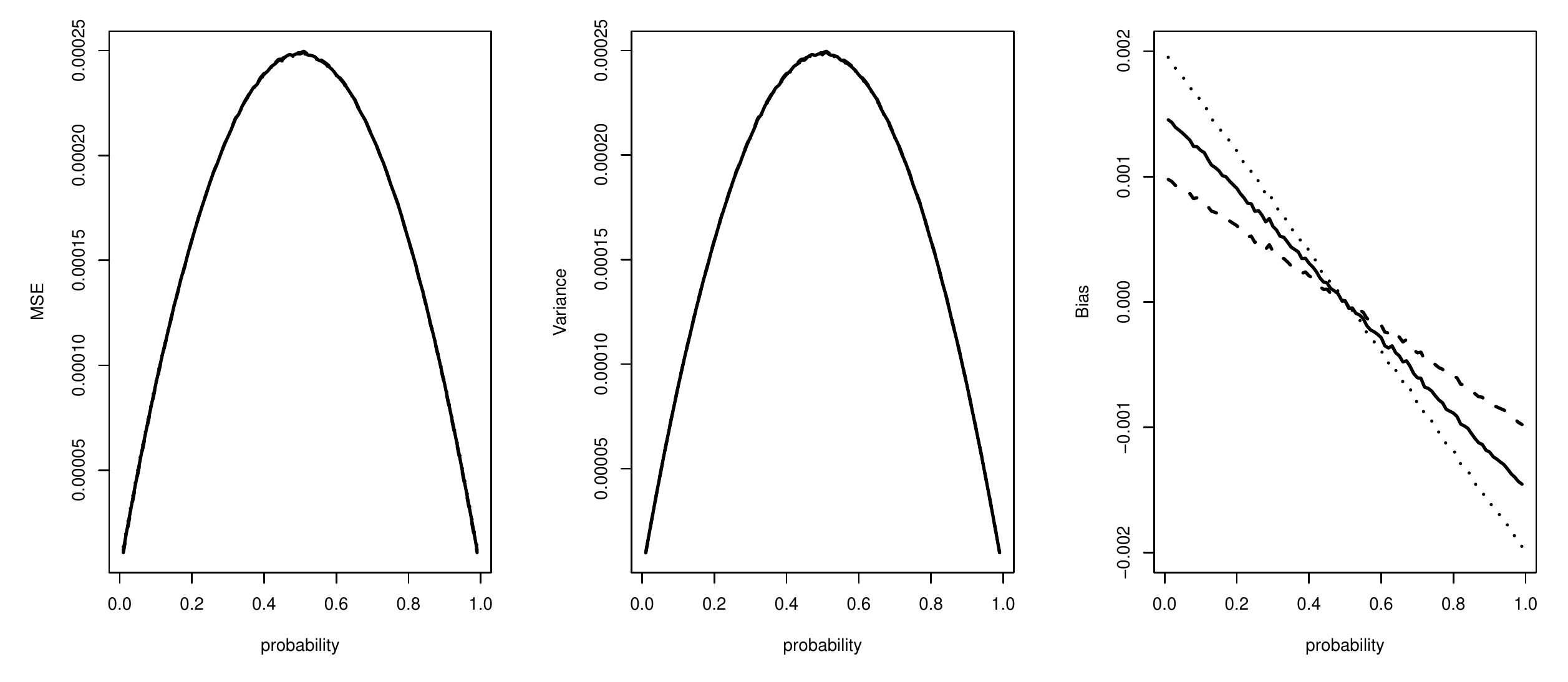}
       \caption{{MSE, variance and bias for the restricted symmetric squared error loss function estimator $\hat{p}_{iq}$ (solid), the squared error loss function estimator $\hat{p}_q$ (dashed) and the Agresti-Coull estimator $\hat{p}_{AC}$ (dotted). Results are based on $n=1000$ observations and $10^4$ simulations.} \label{probability1000}}
     \end{figure}

For even large sample sizes, all three estimators have same characteristics and the curves corresponds to the MSE and Variance are not distinguishable. Also, the differences in bias are of smaller magnitude. Overall, the proposed estimator does provide particular benefits in the intervals of $\theta$ for small and moderate sample sizes while performing similarly to alternatives for the large sample sizes.

     \subsection*{Restricted Estimation of a Normal Distribution Mean}
Figure~\ref{normal100} presents the MSE associated with the four considered estimators $J$, $U_1$, $U_2$,  $U_2^{\prime}$, for each value of $\mu$ in intervals (-a,a) using the total sample sizes $n=100$, and $a=2$ and $a=4$.

\begin{figure}[!h]
  \centering
      \includegraphics[width=1\textwidth]{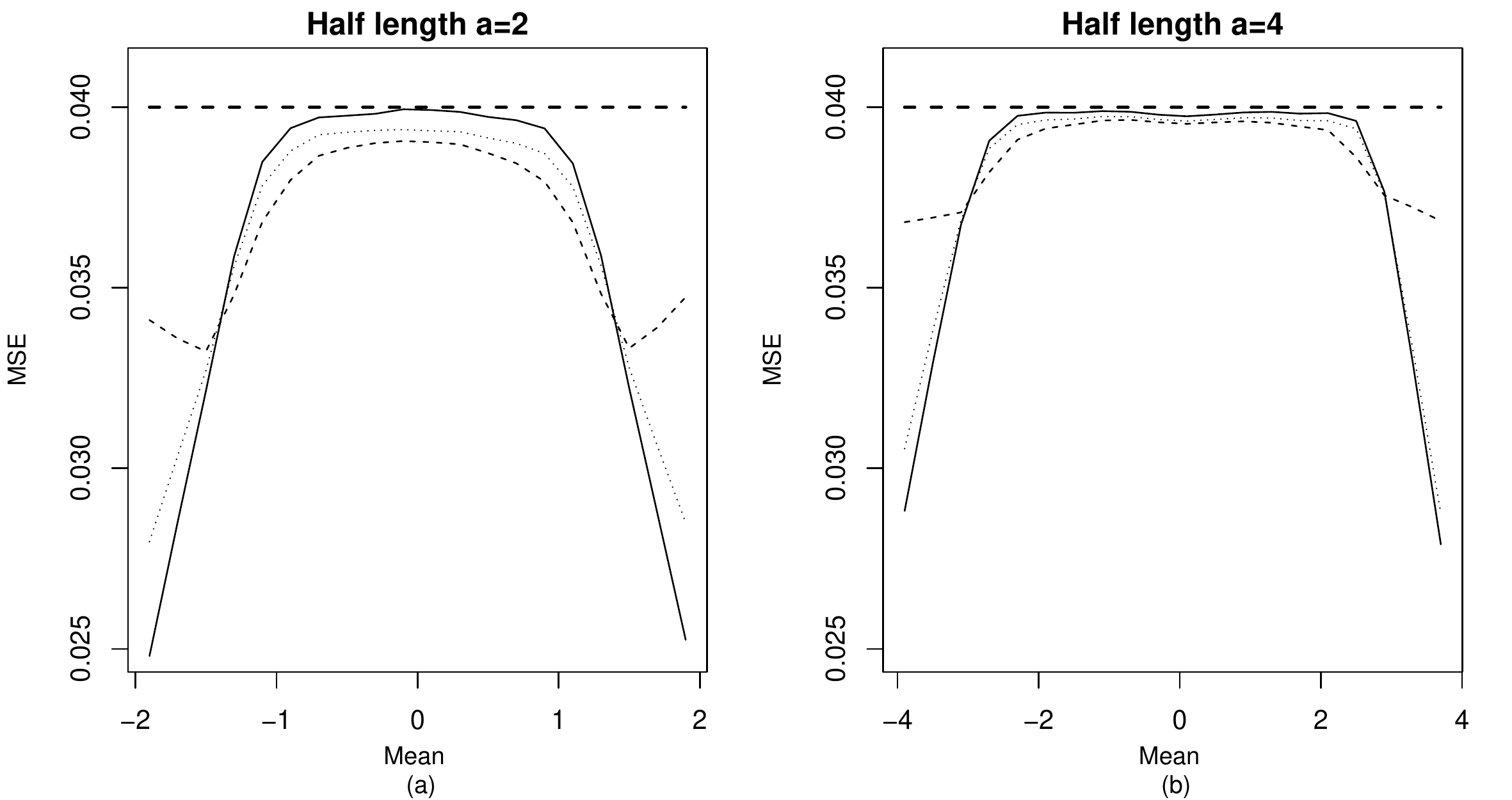}
       \caption{{MSE corresponding to different values of the restricted mean parameter $\mu$ with (a) $a=2$ and (b) $a=4$ and  the Bayes estimator $U_1$ (solid), $U_2$ (dashed) and $U_2^{\prime}$ (dotted) and simple mean estimator  $J$ (solid dashed). Results are based on $n=100$ and $10^4$ replications.} \label{normal100}}
     \end{figure}
     
The differences between all estimators are now smaller but the proposed estimators    $U_2$,  $U_2^{\prime}$ still provide benefit over $U_1$ for wide intervals in both cases. Again, the benefit is larger for a narrow interval of values. Interestingly, estimators $U_1$ and  $U_2^{\prime}$  now do not have a rising ``tails'' in the MSE on the bounds as the increased sample size allows to improve the estimation. At the same time, the MSE for estimator  $U_2$ still increases on the bounds. However, in contrast to the performance using $n=15$, the MSE associated with the estimator $U_2$ is now never above the MSE of the estimator $J$. 

Figure~\ref{normal1000} presents the MSE associated with the four considered estimators $J$, $U_1$, $U_2$,  $U_2^{\prime}$, for each value of $\mu$ in intervals (-a,a) using the total sample sizes $n=1000$, and $a=2$ and $a=4$.

\begin{figure}[!h]
  \centering
      \includegraphics[width=1\textwidth]{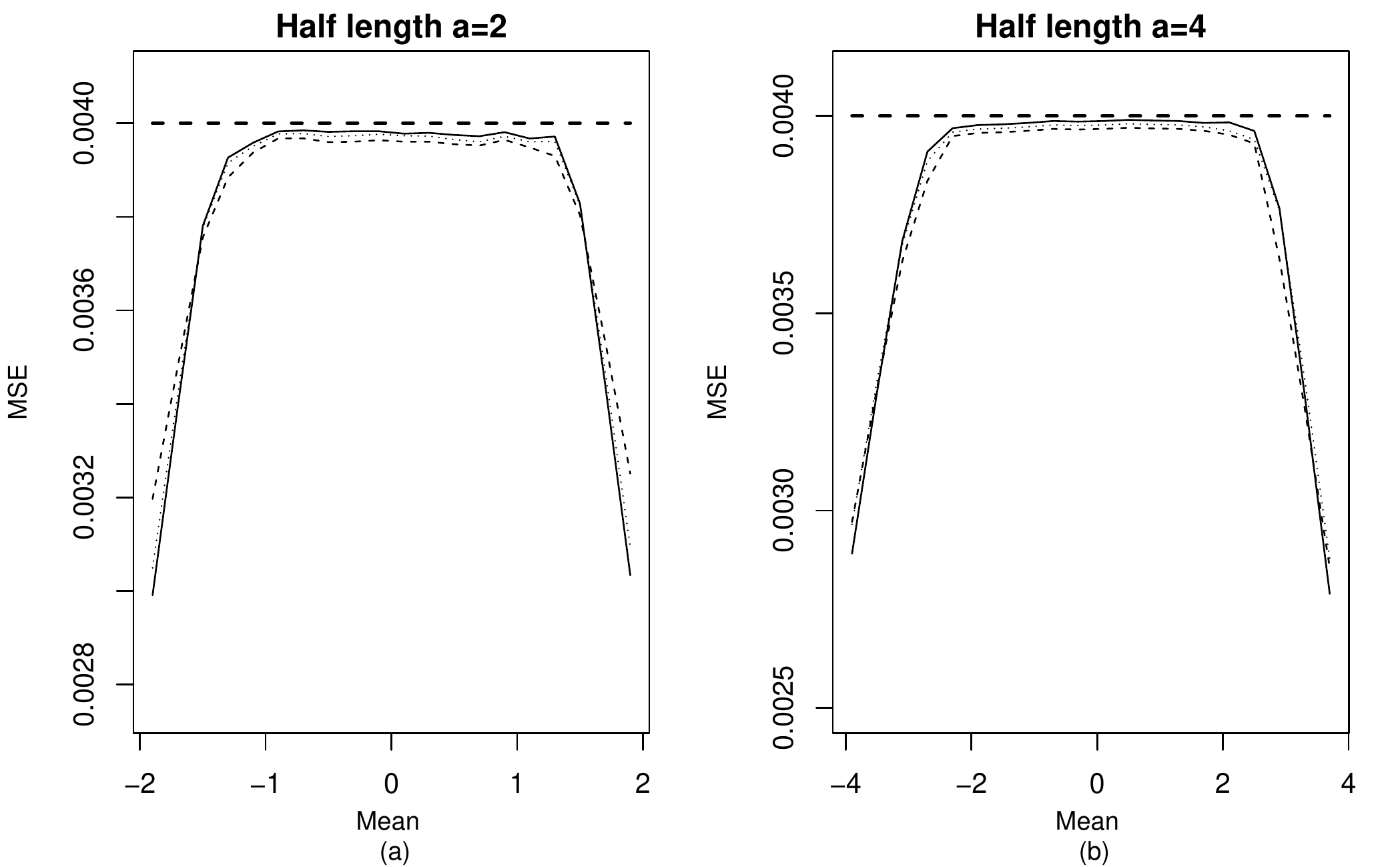}
       \caption{{MSE corresponding to different values of the restricted mean parameter $\mu$ with (a) $a=2$ and (b) $a=4$ and  the Bayes estimator $U_1$ (solid), $U_2$ (dashed) and $U_2^{\prime}$ (dotted) and simple mean estimator  $J$ (solid dashed). Results are based on $n=1000$ and $10^4$ replications.} \label{normal1000}}
     \end{figure}
     
For the large sample size, the same qualitative pattern stands while the differences in the MSE is smaller. Importantly, the MSE of the estimator $U_2$ does not increase on the tails and behaves similarly to the estimator $U_1$ and $U_2^\prime$.

     \subsection*{Bayesian Estimation of the Parameters of a Gamma Distribution}
The differences in the MSEs for both parameters for various values of  the true parameters $\alpha_1$ and $\alpha_2$ are given in Figure~\ref{gamma100} for the sample size $n=100$ and in Figure~\ref{gamma1000} for the sample size $n=1000$.

     \begin{figure}[p]
  \centering
       \includegraphics[width=1\textwidth]{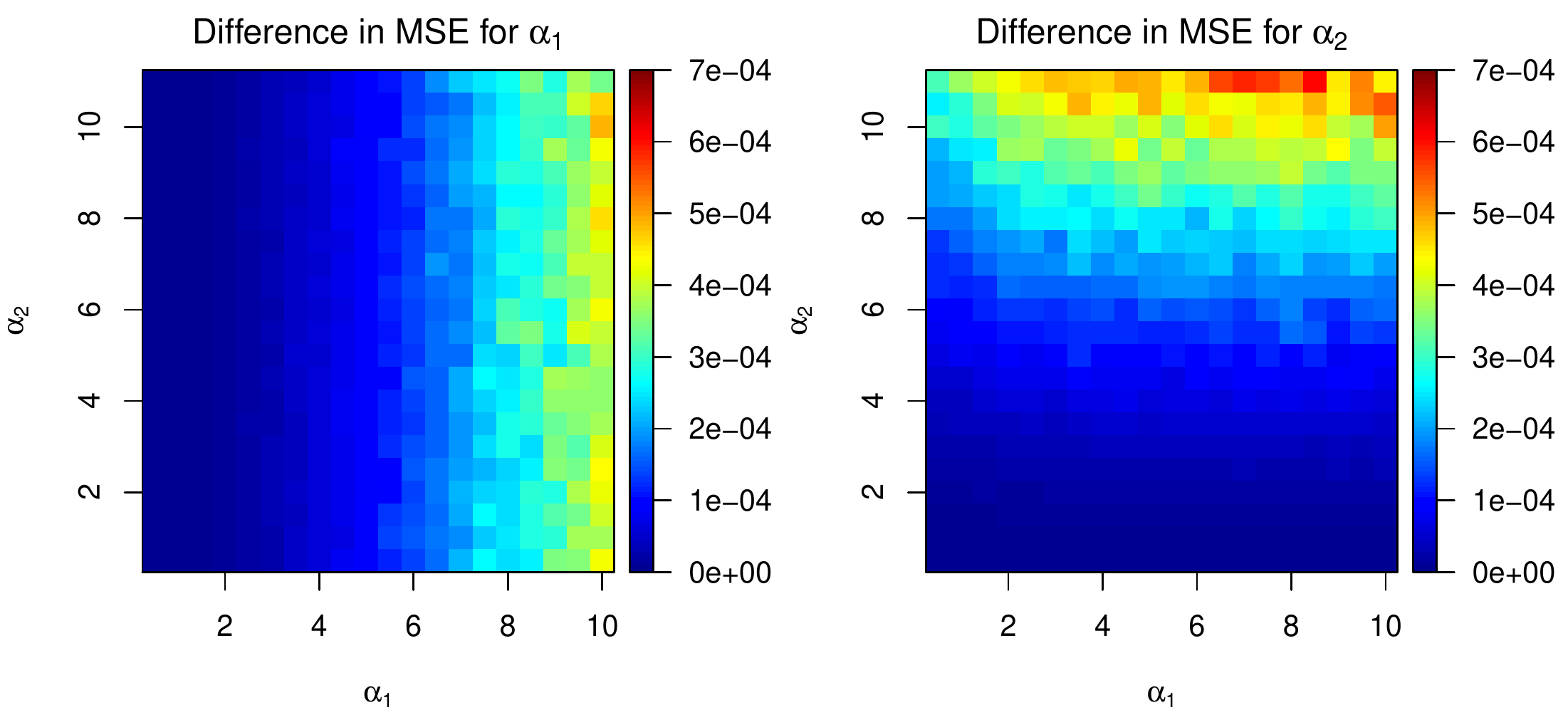}
       \caption{Difference in the MSEs for parameters $\alpha_1$ and $\alpha_2$ for their different true values and using Bayes estimator under the squared error loss function and Bayes estimator under $L_{sq}^{(2)}$. Results are based on $n=100$ and $10^4$ replications.  \label{gamma100}}
     \end{figure}

    \begin{figure}[p]
  \centering
       \includegraphics[width=1\textwidth]{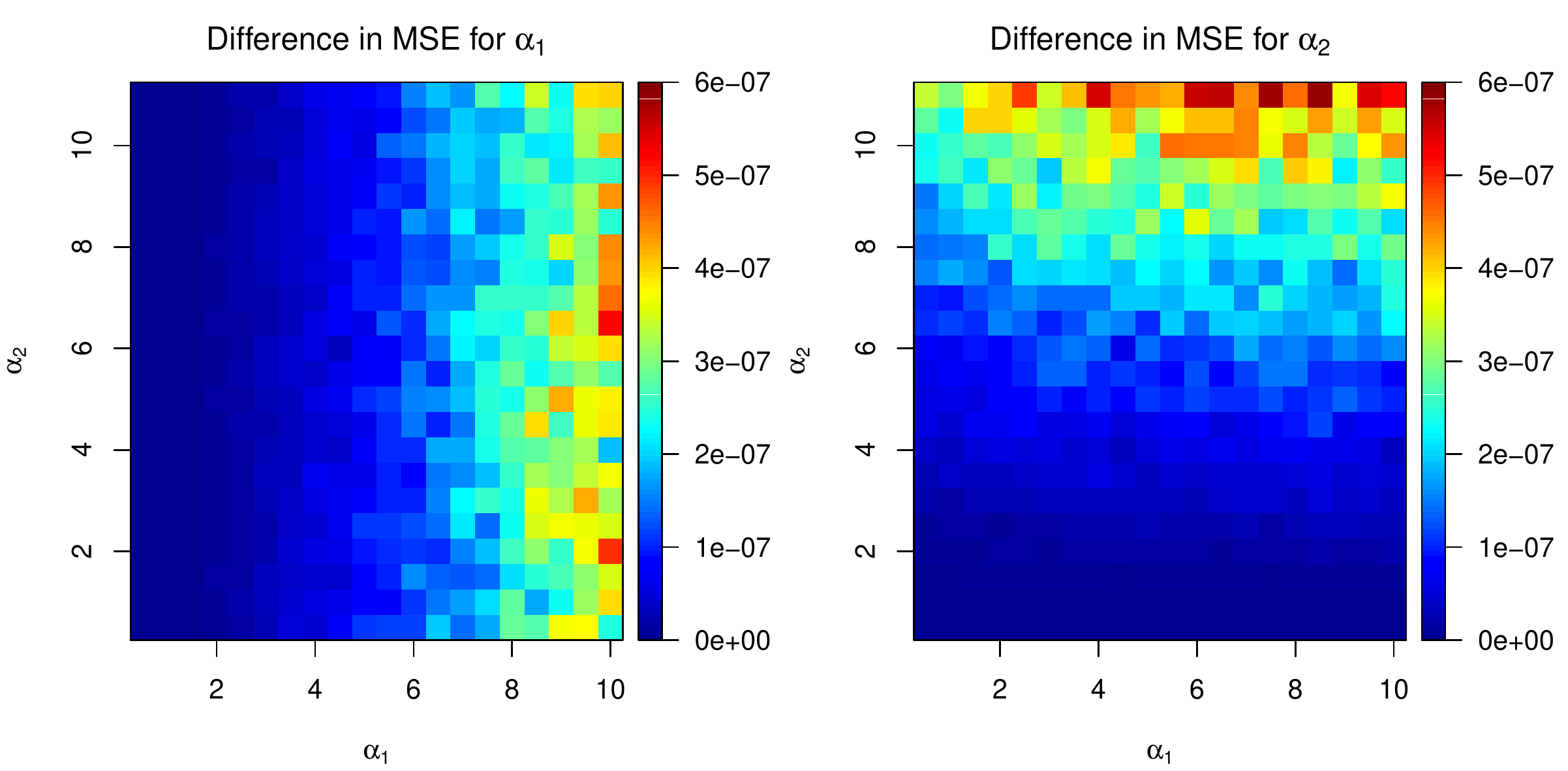}
       \caption{Difference in the MSEs for parameters $\alpha_1$ and $\alpha_2$ for their different true values and using Bayes estimator under the squared error loss function and Bayes estimator under $L_{sq}^{(2)}$. Results are based on $n=1000$ and $10^4$ replications.  \label{gamma1000}}
     \end{figure}
     
While the differences in the MSE is smaller as expected, the proposed estimator corresponds to lower values of the MSE for all considered values of the true parameter $\alpha_1$ and $\alpha_2$. This means that the proposed estimator can provide benefits in terms of the MSE for moderate and large sample sizes as well.

\subsection*{Bayesian Estimation of the Parameters of a Weibull Distribution}
The difference between the MSE of estimators for positive parameters $\lambda$, $\nu$  are given in Table~\ref{weibull100} for the sample size $n=100$, and in Table~\ref{weibull1000} for the sample size $n=1000$. In both cases, the MSE for $\nu$ are scaled by $\frac{1}{\nu^\lambda}$ and by $n$ to obtain the results on a similar scale for various parameters.

\begin{table}[ht!]
\centering
       \caption{Difference in the MSEs for parameters $\lambda$ (upper lines) and $\nu$ (lower lines) for their different true values and using Bayes estimator under the squared error loss function and Bayes estimator under $L_1^{(m)}$. The MSE for $\nu$ are scaled by $1/{\nu^\lambda}$  and $n=100$ to obtain the results on a similar scale for various parameters. Results are based on $10^4$ replications.  \label{weibull100}}
\begin{tabular}{cccccc}
  \hline
 &  $\nu=1$ & $\nu=2$ &  $\nu=5$ & $\nu=10$ & $\nu=15$ \\ 
  \hline
\multirow{2}{*}{$\lambda=1$} & 0.0105 &  0.0096 &  0.0062 &  0.0122 &  0.0103 \\ 
 &  0.0133 & 0.0048 & 0.0208 & 0.0132 & 0.0188 \\
    \hline

\multirow{2}{*}{$\lambda=2$} & 0.0533&  0.0417 & 0.0365  & 0.0430 & 0.0479 \\ 
&  0.0144 & 0.0049 &  0.0276 &  0.0285 &  0.0243 \\
   \hline

\multirow{2}{*}{$\lambda=3$} &  0.0943 & 0.1072 & 0.1138  & 0.1026  & 0.1095\\ 
 &  0.0143 &  0.0091  & 0.0237  & 0.0172  & 0.0101 \\
    \hline

\multirow{2}{*}{$\lambda=4$} &  0.1703  & 0.1830  & 0.1730 & 0.1852 & 0.1771\\ 
 &    0.0081  & 0.0166  & 0.0196 &  0.0065  & 0.0030 \\
    \hline

\multirow{2}{*}{$\lambda=5$} & 0.3456 & 0.2944 & 0.2603 &  0.2554 & 0.2445 \\ 
&  0.0094 & 0.0254 &  0.0110  & 0.0021 &  0.0008 \\
   \hline
\end{tabular}
\end{table}

\begin{table}[ht!]
\centering
       \caption{Difference in the MSEs for parameters $\lambda$ (upper lines) and $\nu$ (lower lines) for their different true values and using Bayes estimator under the squared error loss function and Bayes estimator under $L_1^{(m)}$. The MSE for $\nu$ are scaled by $1/{\nu^\lambda}$  and $n=1000$ to obtain the results on a similar scale for various parameters. Results are based on $10^4$ replications.  \label{weibull1000}}
\begin{tabular}{cccccc}
  \hline
 &  $\nu=1$ & $\nu=2$ &  $\nu=5$ & $\nu=10$ & $\nu=15$ \\ 
  \hline
\multirow{2}{*}{$\lambda=1$} &  0.0002  & 0.0002 & 0.0023 & 0.0005 &  0.0022 \\ 
 &  0.0022  & 0.0009 & 0.0022 &  0.0022&  0.0010\\
    \hline

\multirow{2}{*}{$\lambda=2$} & 0.0083 &  0.0061  & 0.0026  & 0.0069 & 0.0077 \\ 
& 0.0004  & 0.0003  & 0.0026  & 0.0017 &  0.0017 \\
   \hline

\multirow{2}{*}{$\lambda=3$} &  0.0126 & 0.0119 & 0.0062 & 0.0014 & 0.0178\\ 
 &  0.0006  & 0.0003 &  0.0026 &  0.0017  & 0.0006 \\
    \hline

\multirow{2}{*}{$\lambda=4$} &  0.0071  & 0.0105&  0.0150 & 0.0054&  0.0152\\ 
 &   0.0020 & 0.0018 & 0.0020 & 0.0006 & 0.0002 \\
    \hline

\multirow{2}{*}{$\lambda=5$} & 0.0162 & 0.0220 & 0.0117 & 0.0154 & 0.0013\\ 
&  0.0013 & 0.0034 & 0.0010 & 0.0001 & 0.0000 \\
   \hline
\end{tabular}
\end{table}

While the differences in the MSEs are smaller for both parameters,  they are still positive for all considered true values of $\lambda$ and $\nu$. It follows that the proposed Bayes estimator leads to a smaller MSE than the estimator corresponding to the squared error loss function for the moderate and large sample sizes.

\end{document}